\begin{document}


\newcommand{\N}{\mathbb N}
\newcommand{\Z}{\mathbb Z}

\title{Maximality of reversible gate sets\thanks{The research has been supported by Austrian Science Fund (FWF) research projects AR561 and P29931.}}
%
%
\author{Tim Boykett\inst{1,2,3}}
\authorrunning{T. Boykett}
%
\institute{Institute for Algebra, Johannes-Kepler University, Linz, Austria \and
Time's Up Research, Linz, Austria
\email{tim@timesup.org}\\
\url{http://timesup.org/cca} \and
University for Applied Arts, Vienna}
\maketitle              
\begin{abstract}
In order to better understand the structure of closed collections of reversible gates, 
we investigate the lattice of closed sets and the maximal members of this lattice.
In this note, we find the maximal closed sets over a finite alphabet. 
We find that for odd sized alphabets,
there are a finite number of maximal closed sets, while for the even case we have a countable
infinity, almost all of which are related to an alternating permutations.
We then extend to other forms of closure for 
reversible gates, ancilla and borrow closure.
Here we find some structural results, including some examples of maximal closed sets.

\end{abstract}

\section{Introduction}

For a given finite set $A$, we investigate the collections of reversible gates, or
bijections of $A^k$ for all $k$.
In many senses, these are akin to a bijective, permutative or reversible clone. 
The work derived from Tomasso Toffoli's work \cite{toffoli80} and as such we call closed systems of bijections 
reversible Toffoli Algebras (RTAs).
The work also relates  to permutation group theory, as an RTA $C$ is a $\N$-indexed collection of permutations
groups, $C^{[i]} \leq Sym(A^i)$.

In previous papers, Emil Je\v{r}\'{a}bek has found the dual structure \cite{jerabek18} and the author, together with Jarkko Kari and Ville Salo, 
has investigated generating sets \cite{b19,bks17} and other themes.

In this paper, we determine the possibile maximal closed systems, relying strongly on Liebeck, Praeger and Saxl's work\cite{LPS87}, and determine some properties of maximal borrow and ancilla closed RTAs.

We show that the maximal RTAs are defined by an index that defines the single arity at which the RTA is not the full set of bijections.
We then show that for different indices and orders of $A$, only certain possibilities can arise.
For ancilla and borrow closed RTAs we find that there is similarly an index below which the maximal RTAs are full symmetry groups and above which they are never full.

We start by introducing the background properties of RTAs and some permutation group theory.
The rest of the paper is an investigation of maximality, with the main result, Theorem \ref{theoremmax} taking up the main body of this section. We then investigate properties of borrow and ancilla closed RTAs.

\section{Background}

In this section we will introduce the necessary terminology.

Let $A$ be a finite set. $Sym(A)=S_A$ is the set of permutations or bijections of $A$, $Alt(A)$ the set of permutations of even parity.
If $A=\{1,\dots,n\}$ we will write $S_n$ and $A_n$.
We write permutations in cycle notation and act from the right.
We write the action of a permutation $g\in G \leq Sym(A)$ on an element $a\in A$ as $a^g$.
A subgroup $G\leq S_A$ is \emph{transitive} if for all $a,b\in A$ there is a $g\in G$ such that $a^g=b$.
We also say that $G$ acts \emph{transitively} on $A$.
A subgroup $G$ of $S_A$ acts \emph{imprimitively} if there is a nontrivial equivalence relation $\rho$ on $A$ such that
for all $a,b\in A$, for all $g\in G$, $a \rho b \Rightarrow a^g \rho b^g$.
If there is no such equivalence relation, then $G$ acts \emph{primitively} on $A$.

Let $G$ be a group of permutations of a set $A$.
Let $n \in \N$.
Then the \emph{wreath product} $G wr S_n$ is a group of permutations acting on $A^n$.
The elements of $G wr S_n$ are $\{(g_1,\dots,g_n,\alpha) \mid g_i \in G,\,\alpha \in S_n\}$ with action defined as follows:
for $(a_1,\dots,a_n)\in A^n$,  $(a_1,\dots,a_n)^{(g_1,\dots,g_n,\alpha)} 
= (a_{\alpha^{-1}1}^{g_1},\dots,a_{\alpha^{-1}n}^{g_n})$.

Let $B_n(A) = Sym(A^n)$ and $B(A) = \bigcup_{n\in \N} B_n(A)$. We call $B_n(A)$ the set of $n$-ary reversible gates on $A$,
$B(A)$ the set of reversible gates. 
For $\alpha \in S_n$, let $\pi_\alpha \in B_n(A)$ be defined by $\pi_\alpha(x_1,\dots,x_n) = (x_{\alpha^{-1}(1)},\dots,x_{\alpha^{-1}(n)})$.
We call this a \emph{wire permutation}.
Let $\Pi=\{\pi_\alpha \vert \alpha \in S_n,\,n\in\N\}$.
In the case that $\alpha$ is the identity, we write $i_n=\pi_\alpha$, the $n$-ary identity.
Let $f\in B_n(A)$, $g\in B_m(A)$. Define the \emph{parallel composition} as
$f\oplus g \in B_{n+m}(A)$ with $(f\oplus g)(x_1,\dots,x_{n+m}) = (f_1(x_1,\dots,x_n),\dots,f_n(x_1,\dots,x_n),g_1(x_{n+1},\dots,x_{n+m}),\dots,g_m(x_{n+1},\dots,x_{n+m}))$.
For $f,g\in B_n(A)$ we can compose $f\bullet g$ in $Sym(A^n)$. If they have distinct arities we ``pad'' them with identity, for instance $f\in B_n(A)$ and
$g\in B_m(A)$, $n< m$, then define $f\bullet g = (f \oplus i_{m-n}) \bullet g$ and we can thus serially compose all elements of $B(A)$.

We call a subset $C \subseteq B(A)$ that includes $\Pi$ 
and is closed under $\oplus$
and $\bullet$ a \emph{reversible Toffoli algebra} (RTA)
based upon Toffoli's original work \cite{toffoli80}.
These have also been investigated as \emph{permutation clones} \cite{jerabek18} and with ideas from category theory \cite{lafont93}.
If we do not insist upon the inclusion of $\Pi$, the get \emph{reversible iterative algebras}  \cite{bks17} in reference to Malcev and Post's iterative algebras.
For a set $F \subseteq B(A)$ we write $\langle F \rangle$ as the smallest RTA that includes $F$, the RTA \emph{generated} by $F$.

Let $q$ be a prime power, $GF(q)$ the field of order $q$, 
$AGL_n(q)$ the collection of affine invertible maps of $GF(q)^n$ to itself.
Let $\operatorname{Aff}(q) = \bigcup_{n\in \N} AGL_n(q)$ be the RTA of affine maps over $A=GF(q)$.

Let $C$ be an RTA. We write $C^{[n]} = C \cap B_n(A)$ for the elements of $C$ of arity $n$.
We will occasionally write $(a_1,\dots,a_n)\in A^n$ as $a_1a_2\dots a_n$ for clarity.

We say that an RTA $C \leq B(A)$ is \emph{borrow closed} if for all $f\in B(A)$, 
$f\oplus i_1 \in C$ implies that $f\in C$.
We say that an RTA $C \leq B(A)$ is \emph{ancilla closed} if for all $f\in B_n(A)$, 
$g \in C^{[n+1]}$ with some $a\in A$ such that for all $x_1,\dots,x_n \in A$, for all
$i\in\{1,\dots,n\}$, $f_i(x_1,\dots,x_n) = g_i(x_1,\dots,x_n,a)$ and
$g_{n+1}(x_1,\dots,x_n,a)=a$ implies that $f\in C$.
If an RTA is ancilla closed then it is borrow closed.

\section{Permutation Group Theory}
In this section we introduce some results from permutation group theory that will be of use.
The maximal subgroups of permutation groups have been determined.

\begin{theorem}[\cite{LPS87}]
\label{maxsubgp}
 Let  $n \in \N$. Then the maximal subgroups of $S_n$ are conjugate to one of the following $G$.
 \begin{enumerate}
 \item (alternating) $G=A_n$
  \item (intransitive) $G=S_k \times S_m$ where $k+m=n$ and $k\neq m$
  \item (imprimitive) $G=S_m wr S_k$ where $n=mk$, $m,k > 1$ 
  \item (affine) $G=AGL_k(p)$ where $n=p^k$, $p$ a prime
  \item  (diagonal) $G= T^k.(Out(T) \times S_k)$ where $T$ is a nonabelian simple group, $k>1$ and $n = \vert T \vert ^{(k-1)}$
  \item (wreath) $G= S_m wr S_k$ with $n=m^k$, $m\geq 5$, $k>1$
  \item (almost simple) $ T \triangleleft G \leq Aut(T)$, $T\neq A_n$ a nonabelian simple group, $G$ acting primitively on $A$
 \end{enumerate}
 Moreover, all subgroups of these types are maximal when they do not lie in $A_n$, except for a list of known exceptions.
\end{theorem}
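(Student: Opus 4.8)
\noindent\emph{Proof plan.}
The plan is to run the usual reduction via the O'Nan--Scott theorem and then appeal to the detailed inclusion analysis of \cite{LPS87}. Let $G$ be a proper subgroup of $S_n$; I want to decide when it is maximal. First split according to transitivity. If $G$ is intransitive it lies inside the setwise stabiliser of a proper nonempty subset $\Delta$ with $|\Delta|=k$, so $G\leq S_k\times S_m$ with $k+m=n$; maximality forces equality, and $k=m$ is excluded since then $S_k\times S_k$ sits properly inside $S_k wr S_2$ --- this is case (2). If $G$ is transitive but imprimitive it lies inside the stabiliser of a nontrivial block system with $k$ blocks of size $m$, hence in $S_m wr S_k$ acting imprimitively, with $n=mk$ and $m,k>1$; maximality gives case (3). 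In both situations $G$ contains transpositions, so it is automatically outside $A_n$, as required.

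Now assume $G$ is primitive. Here I would invoke the O'Nan--Scott theorem, which sorts the finite primitive groups into affine, almost simple, (simple) diagonal, product-action and twisted-wreath types according to the structure of the socle and its action. A maximal primitive $G$ is pinned down by its type: in the affine case $G$ lies between the regular elementary abelian socle $(\Z/p)^k$ and $AGL_k(p)$, so $G=AGL_k(p)$ with $n=p^k$ --- case (4); the diagonal case gives case (5), $G=T^k.(Out(T)\times S_k)$ with $n=|T|^{k-1}$; the product-action case gives $G=H wr S_k$ on $n=m^k$ points, and maximality forces $H=S_m$ and, to prevent $G$ being swallowed by a larger primitive group when $m$ is small, $m\geq 5$ --- case (6); the almost simple case is case (7). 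The twisted-wreath type contributes no new maximal subgroup of $S_n$: such groups are contained in a primitive group of product-action type. Finally one removes those $G$ that happen to lie in $A_n$ (this genuinely occurs for some groups of types (4)--(7)); the only remaining non-proper possibility is $G=A_n$ itself, case (1).

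For the converse --- that conversely every group of types (1)--(7) not (properly) contained in $A_n$ is maximal in $S_n$, apart from a finite list of exceptions --- the work is genuinely deep and constitutes the bulk of \cite{LPS87}. The strategy is: suppose such a $G$ sits inside a maximal overgroup $M\notin\{A_n,S_n\}$; then $M$ is again of type (1)--(7), and since $G$ is primitive in cases (4)--(7) so is $M$, whence by O'Nan--Scott one may compare their socles and primitive actions. This reduces the problem to deciding inclusions among primitive almost simple groups and between such groups and affine, diagonal or product-action groups --- questions that are settled only by using the classification of finite simple groups together with detailed information on the maximal subgroups and primitive permutation representations of the simple groups, and it is precisely these considerations that produce the exceptional list. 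I expect this converse direction to be the main obstacle: ruling out the various unexpected inclusions (especially among almost simple and product-action groups) and keeping track of parity to decide membership in $A_n$. By contrast, the intransitive, imprimitive and first transitive reductions above are a short combinatorial argument about refinements of partitions and block systems.
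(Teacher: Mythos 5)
The paper offers no proof of this statement: it is quoted directly from \cite{LPS87} as background, so there is nothing internal to compare your argument against. Your outline is a faithful sketch of how the result is actually established in the literature --- the elementary reduction of the intransitive and imprimitive cases to stabilisers of subsets and block systems, the O'Nan--Scott division of the primitive case (with twisted wreath type absorbed into product action), and the CFSG-dependent inclusion analysis for the converse that generates the exceptional list --- and you correctly locate where the genuine difficulty lies; but as written it is a proof plan that ultimately defers the substantive content back to \cite{LPS87}, which is in effect the same move the paper makes by citing the theorem.
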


It is worth noting that in the imprimitive case, we have an equivalence relation with $k$ equivalence clases of order $m$,
the wreath product acts by reordering the equivalence classes, then acting an $S_m$ on each equivalence class.
In the wreath case, the set $A$ is a direct product on $k$ copies of a set of order $m$, the wreath product acts by
permuting indices and then acting as $S_m$ on each index.

\begin{lemma}
\label{lemmaeven3}
 Let $A$ be a set of even order and $n\geq 3$. Then $S_A wr S_n \leq Alt(A^n)$.
\end{lemma}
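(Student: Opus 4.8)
The plan is to reduce to checking a small generating set of $S_A wr S_n$ and to compute the parity of each generator directly from its cycle structure on $A^n$.

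First I would recall that, as a group, $S_A wr S_n$ is generated by its base group $(S_A)^n$ together with the wire permutations $\pi_\alpha$ (for $\alpha\in S_n$), and that $S_A$ and $S_n$ are each generated by transpositions. Hence $S_A wr S_n$ is generated by two families: (i) a transposition $(a\,b)\in S_A$ acting in a single coordinate, and (ii) a wire permutation $\pi_\tau$ for $\tau$ a transposition of two coordinates. It therefore suffices to show each of these lies in $Alt(A^n)$. Writing $m=|A|$, which is even by hypothesis, and using that parity is a conjugacy invariant, I may fix the coordinate(s) involved — say coordinate $1$ in case (i) and coordinates $1,2$ in case (ii).

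Next I would write down the cycle structure. In case (i) the induced permutation of $A^n$ interchanges $(a,x_2,\dots,x_n)$ with $(b,x_2,\dots,x_n)$ for each $(x_2,\dots,x_n)\in A^{n-1}$ and fixes all other tuples, so it is a product of $m^{n-1}$ pairwise disjoint transpositions; since $m$ is even this count is even (already for $n\geq 2$), so the permutation is in $Alt(A^n)$. In case (ii) the coordinate swap fixes precisely the tuples with $x_1=x_2$ and otherwise pairs $(x_1,x_2,x_3,\dots,x_n)$ with $(x_2,x_1,x_3,\dots,x_n)$, so it is a product of $\tfrac12 m^{n-1}(m-1)$ pairwise disjoint transpositions.

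The main point — and the only place the hypothesis $n\geq 3$ is used — is the parity of the count $\tfrac12 m^{n-1}(m-1)$ in case (ii). Since $m$ is even, $m-1$ is odd while $m^{n-1}$ is divisible by $2^{n-1}$; hence this count is divisible by $2^{n-2}$, which is even exactly when $n\geq 3$. So both generator types lie in $Alt(A^n)$, and the lemma follows. I would also note that the hypothesis is sharp: for $|A|=2$ and $n=2$ the swap of the two coordinates is the single transposition $(01\ 10)$, which is odd, so $S_A wr S_n\not\leq Alt(A^n)$ in that case.
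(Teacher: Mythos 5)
Your proof is correct and follows essentially the same route as the paper: both arguments reduce to generators of $S_A wr S_n$ (single-coordinate permutations and coordinate swaps) and check parity by counting the disjoint transpositions each induces on $A^n$, with the hypothesis $n\geq 3$ entering only for the coordinate swap. Your explicit count $\tfrac12 m^{n-1}(m-1)$ and the sharpness remark for $|A|=2$, $n=2$ are nice touches but do not change the substance.
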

\begin{proof}
 $S_A wr S_n$ is generated by $S_A$ acting on the first coordinate of $A^n$ and $S_n$ acting on coordinates.
 
 The action of $S_A$ on $A^n$ is even because for each cycle in the first coordinate, the remaining $n-1$ coordinates are untouched.
 So every cycle occurs $\vert A \vert^{n-1}$ times, which is even, so the action of $S_A$ lies in $Alt(A^n)$.
 
 $S_n$ is generated by $S_{n-1}$ and the involution $(n-1\,n)$. By the same argument, each cycle of the action occurs an 
 even number of times, so the action of $S_{n-1}$ and the involution $((n-1)\,n)$ on $A^n$ lies in $Alt(A^n)$ so we are done.\qed
\end{proof}
We have a similar inclusion for affineness.

\begin{lemma}
\label{lemmaaffineeven}
 For $n\geq 3$, $AGL_n(2)\leq Alt(2^n)$. 
\end{lemma}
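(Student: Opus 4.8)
The plan is to reuse the generator‑and‑cycle‑count strategy of Lemma~\ref{lemmaeven3}. Recall that $AGL_n(2)=T\rtimes GL_n(2)$, where $T$ is the group of translations $\tau_b\colon x\mapsto x+b$, so $AGL_n(2)$ is generated by the $\tau_b$ together with $GL_n(2)$; and $GL_n(2)$ is itself generated by the elementary transvections $t_{ij}\colon x\mapsto x+x_i e_j$ (add the $i$-th coordinate onto the $j$-th, $i\neq j$, with $e_j$ the $j$-th standard basis vector), this being just Gaussian elimination over $GF(2)$. Since $Alt(2^n)$ is a subgroup of $Sym(GF(2)^n)=Sym(2^n)$, it suffices to verify that each of these generators is an even permutation of the $2^n$ points of $GF(2)^n$.

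For a nontrivial translation $\tau_b$ (i.e.\ $b\neq 0$): it is a fixed‑point‑free involution of a $2^n$-element set, hence a product of exactly $2^{n-1}$ transpositions, which is even since $n\geq 3$ (here even $n\geq 2$ would do). For an elementary transvection $t_{ij}$: it fixes every one of the $2^{n-1}$ vectors with $x_i=0$, and swaps each of the remaining $2^{n-1}$ vectors (those with $x_i=1$) with the vector obtained by flipping its $j$-th coordinate; hence $t_{ij}$ is a product of $2^{n-2}$ transpositions, which is even precisely because $n\geq 3$. So all generators lie in $Alt(2^n)$, and therefore so does $AGL_n(2)$.

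There is no genuine obstacle here, only the bookkeeping: choosing a correct generating set for $GL_n(2)$ and reading off the cycle type of a transvection. It is worth noting that the bound $n\geq 3$ is sharp, since $GL_2(2)\cong S_3$ already acts on the three nonzero vectors of $GF(2)^2$ with transpositions and so is not contained in $Alt(4)$; equivalently, the transvection count $2^{n-2}$ is odd when $n=2$.

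Alternatively, one could avoid cycle counting entirely: for $n\geq 3$ the group $GL_n(2)=SL_n(2)$ is nonabelian simple, hence perfect, so it has no subgroup of index $2$ and its image under the sign homomorphism $Sym(2^n)\to\{\pm 1\}$ is trivial; combined with the evenness of translations and $AGL_n(2)=\langle T,GL_n(2)\rangle$ this gives the claim. I would present the direct computation, however, as it matches the style of Lemma~\ref{lemmaeven3}.
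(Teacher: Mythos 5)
Your proof is correct, and it follows the same basic strategy as the paper's: exhibit a generating set for $AGL_n(2)$ and check that each generator has even parity as a permutation of the $2^n$ points by counting transpositions. The differences are in the choice of generators. The paper generates the linear part from the coordinate-permutation matrices $\pi_{(1,i)}$ together with a single elementary transvection, whereas you use the full set of transvections $t_{ij}$; both give $GL_n(2)$ and both parity counts come down to the same observation that a map touching only two coordinates has cycle count divisible by $2^{n-2}$. More importantly, you explicitly include the translations $\tau_b$ in the generating set and verify that a nontrivial translation is a fixed-point-free involution, hence a product of $2^{n-1}$ transpositions. The paper's proof as written lists only linear generators, which generate the point stabilizer $GL_n(2)$ rather than all of $AGL_n(2)$, so your treatment of the translation part is a genuine (if easy) completion of the argument rather than a redundancy. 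Your remarks that the bound $n\geq 3$ is sharp because the transvection contributes $2^{n-2}$ transpositions, and that one could alternatively invoke the perfectness of $SL_n(2)$ to dispose of the linear part without any counting, are both correct and worth keeping as they make the role of the hypothesis transparent.
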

\begin{proof}
$AGL_n(2)$ is generated by the permutation matrices  
$\{\pi_{(1,i)} \mid i=2,\dots,n\}$ and the matrix $\begin{bmatrix} 1 & 1\\0&1\end{bmatrix} \oplus i_{n-2}$.
These bijections are even parity because they only act on  two entries, thus 
have parity divible by $2^{n-2}$ modulo 2 which is 0.\qed
\end{proof}


\begin{lemma}
\label{lemmaeven4}
 Let $A$ be a set of even order. Then $S_A wr S_2 \leq Alt(A^2)$ iff $4$ divides $\vert A\vert$.
\end{lemma}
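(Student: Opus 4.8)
The plan is to reduce the question to the parities of a generating set of $S_A wr S_2$. Recall that $S_A wr S_2$ is generated by the copy of $S_A$ acting on the first coordinate of $A^2$ together with the coordinate-swap $\sigma \colon (a,b)\mapsto(b,a)$, since the base group $S_A\times S_A$ is generated by $S_A\times\{1\}$ and its conjugate $\{1\}\times S_A = \sigma^{-1}(S_A\times\{1\})\sigma$. Because $Alt(A^2)$ is a group, $S_A wr S_2 \leq Alt(A^2)$ holds if and only if each of these generators is an even permutation of $A^2$.

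First I would dispose of the copy of $S_A$ acting on the first coordinate. Exactly as in the proof of Lemma \ref{lemmaeven3}, a transposition of $A$ acting on the first coordinate decomposes, on $A^2$, into $|A|$ disjoint transpositions, one for each value of the second coordinate; this is even because $|A|$ is even. Hence this whole copy of $S_A$ lies in $Alt(A^2)$ whenever $|A|$ is even, independently of divisibility by $4$, and likewise for the copy acting on the second coordinate.

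The main point—really the only genuine step—is the parity of $\sigma$. It fixes the diagonal and swaps $(a,b)$ with $(b,a)$ for each unordered pair $\{a,b\}$ with $a\neq b$, so $\sigma$ is a product of $\binom{|A|}{2}=\tfrac{|A|(|A|-1)}{2}$ disjoint transpositions. Writing $|A|=2t$, this count equals $t(2t-1)$, which is congruent to $t$ modulo $2$ because $2t-1$ is odd. Thus $\sigma$ is even precisely when $t$ is even, that is, precisely when $4 \mid |A|$.

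Putting these together: if $4\mid|A|$ then all the listed generators are even, whence $S_A wr S_2 \leq Alt(A^2)$; and if $4\nmid|A|$, so $|A|\equiv 2 \pmod 4$, then $\sigma \in S_A wr S_2$ is an odd permutation, which already witnesses $S_A wr S_2 \not\leq Alt(A^2)$. I do not anticipate any difficulty beyond the parity count for $\sigma$; the one thing to keep in mind is that in the negative direction the single element $\sigma$ suffices, so no further analysis of the group is needed.
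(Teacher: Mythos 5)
Your proposal is correct and follows essentially the same route as the paper: the copy of $S_A$ on one coordinate is even because each transposition of $A$ induces $\vert A\vert$ disjoint transpositions of $A^2$, and the coordinate swap is a product of $\tfrac{\vert A\vert(\vert A\vert-1)}{2}$ transpositions, which is even iff $4$ divides $\vert A\vert$. Your explicit remark that the odd swap alone witnesses non-containment in the case $\vert A\vert\equiv 2 \pmod 4$ is a welcome clarification of a step the paper leaves implicit.
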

\begin{proof}
 The same argument as above applies for $S_A$.
 The action of $S_2$ swaps $\frac{\vert A\vert (\vert A\vert-1)}{2}$ pairs. This
 is even iff $4$ divides $\vert A\vert$.\qed
\end{proof}

\section{Maximality in RTAs}
In this section, we will determine the maximal RTAs on a finite set $A$.

We have some nice generation results from other papers that will be useful.

\begin{theorem}[\cite{b19} Theorem 5.9]
\label{theoremAodd}
 Let $A$ be odd. If $B_1(A),B_2(A) \subseteq C \subseteq B(A)$, then $C=B(A)$.
\end{theorem}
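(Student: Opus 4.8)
The claim is equivalent to $\langle B_1(A)\cup B_2(A)\rangle=B(A)$, so the plan is to prove $B_n(A)\subseteq C$ by induction on $n$; the cases $n=1,2$ are exactly the hypothesis. Fix $n\ge 3$ and write $m=|A|$, which we may assume is odd and, since $m=1$ is trivial, satisfies $m\ge 3$. Put $H:=C\cap B_n(A)\le Sym(A^n)$. Using only that $C$ contains $\Pi$, is closed under $\oplus$ and $\bullet$, and contains $B_{n-1}(A)$ by the inductive hypothesis, one sees that $H$ already contains all wire permutations on $n$ coordinates, the coordinatewise group $Sym(A)^n$ (realised as $\sigma_1\oplus\dots\oplus\sigma_n$ with $\sigma_i\in B_1(A)$), every binary controlled gate acting on a chosen pair of coordinates (taken from $B_2(A)$ and padded by identities), and a full copy of $B_{n-1}(A)$ placed on any $n-1$ of the coordinates. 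The goal is to strengthen this to $H=Sym(A^n)$.

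The first main step is to exhibit a single $3$-cycle of $A^n$ inside $H$, using Toffoli's construction of controlled gates via group commutators. Relabel so that $0,1,2$ are distinct elements of $A$, and write the $3$-cycle $(0\,1\,2)\in Sym(A)$ as a commutator of two permutations $h_1,h_2\in B_1(A)$. Let $P$ be the gate applying $h_1$ to coordinate $1$ precisely when coordinates $2,\dots,n-1$ are all $0$, and $Q$ the gate applying $h_2$ to coordinate $1$ precisely when coordinate $n$ is $0$. A routine check shows that if two controlled gates have disjoint control-coordinate sets and share their target coordinates, those targets being disjoint from all controls, then their group commutator is the gate that applies the commutator of the two actions on the targets when all controls are satisfied, and the identity otherwise. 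Hence $P\bullet Q\bullet P^{-1}\bullet Q^{-1}$ applies $(0\,1\,2)$ to coordinate $1$ exactly when coordinates $2,\dots,n$ are all $0$; as a permutation of $A^n$ this is the $3$-cycle on the three tuples $(0,\dots,0)$, $(1,0,\dots,0)$, $(2,0,\dots,0)$. Now $P$ involves only coordinates $1,\dots,n-1$, so it lies in $B_{n-1}(A)\subseteq C$, while $Q\in B_2(A)\subseteq C$; since each $C^{[k]}$ is a group, this $3$-cycle lies in $H$.

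It remains to show $H=Sym(A^n)$. First, $H$ is transitive, as $Sym(A)^n\le H$ is already transitive on $A^n$. Second, $H$ is primitive: any nontrivial $H$-invariant equivalence on $A^n$ is invariant under the $3$-cycle just constructed, which forces its three moved tuples into a single block $B_0$; then, applying suitable elements of $Sym(A)^n$, wire permutations, and binary controlled gates --- each chosen to fix $(0,\dots,0)$ and hence to stabilise $B_0$ --- one enlarges $B_0$ one coordinate at a time until $B_0=A^n$, a contradiction. Third, $H\not\le Alt(A^n)$: for a transposition $\tau\in Sym(A)$, the gate $\tau\oplus i_{n-1}$ is, as a permutation of $A^n$, a product of $m^{n-1}$ disjoint transpositions, and $m^{n-1}$ is odd, so it is an odd permutation. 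This is the one place where oddness of $|A|$ is used, and it is essential: for even $|A|$ such gates lie in $Alt(A^n)$ (compare Lemmas \ref{lemmaeven3} and \ref{lemmaeven4}) and the theorem fails. Finally, a primitive permutation group containing a $3$-cycle contains the alternating group (Jordan's theorem; equivalently, reading off Theorem \ref{maxsubgp}, the only primitive subgroups of $Sym(A^n)$ of these degrees that contain an element of support $3$ are $Alt(A^n)$ and $Sym(A^n)$, since the affine, diagonal, wreath and almost-simple maximal subgroups all have larger minimal support). Hence $H\supseteq Alt(A^n)$, and together with $H\not\le Alt(A^n)$ this gives $H=Sym(A^n)=B_n(A)$, closing the induction.

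I expect the primitivity argument to be the main obstacle: one must carry out the block-enlargement carefully, checking that the coordinatewise permutations, wire permutations, and binary controlled gates available in $H$ really do suffice to grow $B_0$ to all of $A^n$, and one must separately dispatch the handful of small pairs $(m,n)$ --- in practice only the observation that $m^n$ is large enough (here $m^n\ge 27$) for the minimal-support bounds on the nonalternating primitive maximal subgroups to apply. The remaining ingredients --- transitivity, the commutator identity, and the parity count --- are routine once the setup is in place.
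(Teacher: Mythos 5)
The paper does not prove this statement itself; it is quoted from \cite{b19} (Theorem 5.9), so there is no internal proof to compare against. Your argument appears correct, and it runs in the same circle of ideas the paper uses for its even-alphabet analogues. The commutator of your two controlled gates $P$ and $Q$ (common target, disjoint control sets, target disjoint from all controls) is indeed the gate applying $[h_1,h_2]=(0\,1\,2)$ to coordinate $1$ exactly when coordinates $2,\dots,n$ are all $0$, hence a single $3$-cycle of $A^n$; it lies in $C^{[n]}$ because $C^{[n]}$ contains $i_n$, is closed under composition, and is therefore a submonoid of the finite group $Sym(A^n)$, i.e.\ a subgroup (and $P$ lies in $B_{n-1}(A)\subseteq C$ by the inductive hypothesis, or in $B_2(A)$ when $n=3$). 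Your primitivity step is really a proof of $2$-transitivity --- the stabiliser of $(0,\dots,0)$ moves $(1,0,\dots,0)$ to every other tuple using unary maps fixing $0$, wire permutations, and binary gates fixing $(0,0)$ --- which gives primitivity at once and lets you drop the preliminary observation that a $3$-cycle preserving a nontrivial block system has its support inside one block. The parity count ($\tau\oplus i_{n-1}$ is a product of $m^{n-1}$ disjoint transpositions, odd for odd $m$) is exactly the computation that goes the other way in Lemma \ref{lemmaeven3}, and Jordan's theorem for $3$-cycles has no degree restriction, so the induction closes. For comparison, the paper's Lemma \ref{lemmaAeven_M3} reaches $Alt(A^3)$ by producing a controlled $3$-cycle via \cite{bks17} and conjugating it to all $3$-cycles using $3$-transitivity (Lemma \ref{lemma3trans}); your route via primitivity and Jordan is an equally standard alternative, and it has the merit of isolating precisely where oddness of $\vert A\vert$ enters, which the present paper records only implicitly through Lemma \ref{lemmaeven3}.
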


\begin{theorem}[\cite{bks17} Theorem 20]
\label{theoremAeven_BKS}
 If $Alt(A^4) \subseteq C \subseteq B(A)$ then $Alt(A^k) \subseteq C$ for all $k\geq 5$.
\end{theorem}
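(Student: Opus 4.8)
The plan is to induct on $k$, proving the slightly stronger statement that $Alt(A^n)\subseteq C$ forces $Alt(A^{n+1})\subseteq C$ for every $n\geq 4$; the hypothesis is the base case $n=4$. Write $q=|A|$ and work inside $C^{[n+1]}=C\cap B_{n+1}(A)$, which is a subgroup of $Sym(A^{n+1})$ because a finite submonoid of a group is a subgroup. Since $Alt(A^n)\subseteq C$, parallel composition with $i_1$ gives $Alt(A^n)\oplus i_1\subseteq C^{[n+1]}$, and conjugating by wire permutations produces, for each coordinate $i$, a subgroup $D_i\cong Alt(A^n)$ acting as $Alt(A^n)$ on the coordinates other than $i$ and uniformly over coordinate $i$. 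The whole argument uses only the $D_i$, the wire permutations, and products thereof, and aims to show that the group they generate already contains $Alt(A^{n+1})$.

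The first step is to show $C^{[n+1]}$ is primitive. Transitivity is immediate: from a constant tuple, act by $D_{n+1}$, swap coordinates $1$ and $n+1$ by a wire permutation, and act by $D_{n+1}$ again. For primitivity, let $\rho$ be a congruence. On each slice $\{x_{n+1}=a\}$ the group $D_{n+1}$ acts as the full group $Alt(A^n)$, which is primitive since $q^n\geq 16$, so the restriction of $\rho$ to each slice is trivial or complete. If it is trivial on every slice, then each $\rho$-block meets each slice in at most one point; combining the orbit structure of the stabiliser of a point in $D_{n+1}$ (whose only short orbits are singletons) with the same for $D_1$ confines the block through a point to the intersection of the two coordinate lines through that point, forcing singleton blocks. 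If it is complete on some slice, then using that $D_1$ carries that slice onto arbitrary subsets of the right size (since $Alt(A^n)$ is transitive on subsets of each cardinality) forces the block containing it to be all of $A^{n+1}$. Hence $C^{[n+1]}$ is primitive.

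The second step, which I expect to be the real obstacle, is to place a single $3$-cycle in $C^{[n+1]}$. The difficulty is structural: every nontrivial element of a single $D_i$ is a \emph{diagonal} lift of a permutation of $A^n$, so its support meets at least $q$ slices, and a $3$-cycle must be manufactured by a commutator that kills everything unwanted. Concretely I would fix $c\in A$ with $c\neq 0$ and five distinct tuples of $A^{n-1}$ (available since $q^{n-1}\geq 8$), and use a suitable $3$-cycle $\mu$ of $Alt(A^n)$ together with the wire permutation $\pi$ exchanging coordinates $n$ and $n+1$ to form $g=(\mu\oplus i_1)\,\pi\,(\mu^{-1}\oplus i_1)\,\pi^{-1}\in C^{[n+1]}$. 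One computes that $g$ is a product of pairwise disjoint $3$-cycles running along two transverse families of lines, the would-be third family collapsing to the identity precisely because $c\neq 0$. A similarly built element $g'$ of $D_n$ can be arranged so that its support meets that of $g$ in exactly one point, with all of its remaining cycles disjoint from $g$ and vice versa. Then the commutator $[g,g']$ collapses to the commutator of two $3$-cycles sharing a single point, which is itself a $3$-cycle. This bookkeeping — choosing the auxiliary tuples and verifying that the extra cycles cancel — is the fiddly heart of the proof.

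Finally, a primitive permutation group on a set of size $q^{n+1}\geq 32$ that contains a $3$-cycle contains the alternating group (Jordan's theorem), so $C^{[n+1]}\supseteq Alt(A^{n+1})$, and the induction is complete.
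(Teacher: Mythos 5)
This statement is imported verbatim from \cite{bks17} (Theorem 20); the present paper contains no proof of it, so there is nothing internal to compare your argument against. For the record, the argument in \cite{bks17} is organised around \emph{controlled} permutations: one shows by a commutator construction that $(k{-}1)$-controlled $3$-cycles yield $k$-controlled $3$-cycles, and an $n$-controlled $3$-cycle is literally a $3$-cycle of $A^{n+1}$, after which transitivity gives all of $Alt(A^{n+1})$. So your overall architecture (induct on arity; get primitivity; manufacture one $3$-cycle by a commutator whose unwanted cycles cancel; finish with Jordan's theorem) is the same idea in a different packaging, and it is a viable route.

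The problem is that the step carrying all the content --- producing the $3$-cycle --- is announced rather than proved, and you say so yourself (``this bookkeeping \dots is the fiddly heart of the proof''). As written it cannot be checked: you introduce an element $c\neq 0$ although $A$ carries no distinguished zero, you reserve ``five distinct tuples'' that never reappear, and the claim that $g=(\mu\oplus i_1)\,\pi\,(\mu^{-1}\oplus i_1)\,\pi^{-1}$ is a product of pairwise disjoint $3$-cycles with ``the third family collapsing'' is exactly the computation the theorem needs and is nowhere performed. (A correct version does exist --- e.g.\ take $\mu$ a $3$-cycle on three tuples sharing their first $n-1$ coordinates, so that the two diagonal lifts interact only on a $3\times 3$ block --- but one must then also exhibit $g'$ whose support meets the rather large ``cross''-shaped support of $g$ in a single point, and verify the disjoint-$3$-cycle structure of both; none of this is in your text.) The primitivity step also leans on a false subsidiary claim: $D_1$ preserves the first coordinate, so it does not ``carry the slice $\{x_{n+1}=a\}$ onto arbitrary subsets of the right size''; the intended argument (high transitivity of $Alt(A^n)$ on each sub-slice $\{x_1=v\}$) is recoverable but is not what you wrote. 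The final appeal to Jordan's theorem is fine. In short: right strategy, but the decisive computation is missing, so this does not yet constitute a proof.
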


The techniques used in the proof of Theorem \ref{theoremAeven_BKS} can be adapted to prove some further results.

\begin{lemma}
\label{lemma3trans}
 Let $\vert A\vert \geq 3$, then $\langle B_1(A),B_2(A)\rangle$ is 3-transitive on $A^3$.
\end{lemma}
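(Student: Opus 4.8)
The goal is to show $\langle B_1(A), B_2(A)\rangle$ acts $3$-transitively on $A^3$, i.e.\ given two ordered triples of distinct points in $A^3$, some element of the RTA generated by all unary and binary gates carries the first to the second. My plan is to reduce $3$-transitivity to a normal-form statement: I would show that, using only gates from $B_1(A)$ and $B_2(A)$ (padded with identities where needed, and wire permutations, which are free), one can map any given distinct triple $u=(u_1,u_2,u_3) \in A^3$ to a fixed ``base'' triple, say $(0,0,0)$-style representatives — but since the three coordinates of a point in $A^3$ need not be distinct, the right notion is to send any chosen $3$ distinct elements $p,q,r \in A^3$ to three fixed distinct elements. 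Since $2$-transitivity is comparatively easy, the content is the third point.

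Concretely, first I would establish $2$-transitivity: given distinct $p,q \in A^3$, write $p=(p_1,p_2,p_3)$, $q=(q_1,q_2,q_3)$. Using binary gates acting on coordinate pairs $\{1,2\}$, $\{2,3\}$, $\{1,3\}$ (available since $B_2(A)$ is closed under padding and conjugation by wire permutations $\pi_\alpha \in \Pi$), one can freely permute $A^2$ on any two of the three coordinates while leaving the third fixed. A short argument — analogous to showing that transpositions on overlapping coordinate pairs generate enough of $Sym(A^3)$ — shows these moves already act transitively, indeed $2$-transitively, on $A^3$; one first moves $p$ to a normalized point, then uses a binary gate on the pair of coordinates where $p$ and the image of $q$ still differ. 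Because $|A| \geq 3$, there is enough room to avoid collisions while doing this.

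For the third point: after mapping $p \mapsto \hat p$ and $q \mapsto \hat q$ by a single gate $h_1 \in \langle B_1, B_2\rangle$ fixing $\hat p$, the point $r$ has moved to some $\hat r = h_1(r)$, distinct from $\hat p$ and $\hat q$. I would then argue that the pointwise stabilizer of $\{\hat p, \hat q\}$ inside $\langle B_1(A), B_2(A)\rangle$ still acts transitively on $A^3 \setminus \{\hat p, \hat q\}$. The mechanism is the same as in Theorem \ref{theoremAeven_BKS}'s proof technique (which the excerpt says can be adapted): binary gates on a coordinate pair, conjugated and composed, produce $3$-cycles on $A^3$ that fix $\hat p$ and $\hat q$ and move $\hat r$ anywhere — the key point being that a binary gate on coordinates $\{i,j\}$ acting as a $3$-cycle on the relevant two-coordinate values, padded into arity $3$, is available in $B_2(A)$, and by choosing the pair $\{i,j\}$ and the cycle appropriately one can always fix two prescribed points and still move a third. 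Counting: the stabilizer of two points acting on $|A|^3 - 2$ remaining points needs transitivity, and $3$-cycles supported away from $\hat p, \hat q$ generate a transitive (even alternating) action on that set once $|A|^3 - 2 \geq 3$, which holds since $|A| \geq 3$.

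The main obstacle I anticipate is the bookkeeping around collisions: when we act with a binary gate on two coordinates to move one of the three special points, we may inadvertently move or merge the images of the other two, and when $|A| = 3$ the sets are small enough that a naive choice of gate can fail. The fix is careful case analysis on how many coordinates the three points share, plus using wire permutations to reduce to a canonical pattern of agreements/disagreements among coordinates before applying the binary gates; alternatively, one invokes Lemma \ref{lemma3trans}'s expected corollary flavour by first using Theorem \ref{theoremAodd} / the structure of $\langle B_1(A), B_2(A)\rangle^{[2]}$, which already contains all of $Sym(A^2)$ on each coordinate pair, so that on any two coordinates we have the full symmetric group and only the interaction across the ``third'' coordinate needs handling. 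With that in hand, $3$-transitivity follows by the standard argument that $Sym(A^2)$ acting on overlapping pairs of a triple of coordinates generates a $3$-transitive (in fact much larger) subgroup of $Sym(A^3)$.
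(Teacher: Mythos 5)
Your high-level strategy (normalise to a canonical triple, establish $2$-transitivity, then show the two-point stabiliser is transitive) is a legitimate route in principle, but the step that carries all the weight rests on a false premise. You claim that a binary gate acting as a $3$-cycle on a coordinate pair, ``padded into arity $3$'', yields a $3$-cycle on $A^3$ fixing $\hat p,\hat q$ and moving $\hat r$. It does not: if $g\in Sym(A^2)$ is a $3$-cycle, then $g\oplus i_1$ acts on $A^3$ as a product of $\vert A\vert$ disjoint $3$-cycles, one for each value of the untouched coordinate. Genuine $3$-cycles on $A^3$ (the ``$2$-controlled $3$-cycles'') are obtained in this paper only via the commutator machinery of \cite{bks17} Lemma~18, and that derivation (Lemma~\ref{lemmaAeven_M3}) itself \emph{uses} the present lemma as an input, so appealing to it here would be circular. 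Consequently your assertion that ``$3$-cycles supported away from $\hat p,\hat q$ generate a transitive (even alternating) action'' has no supply of such $3$-cycles, and the closing appeal to a ``standard argument'' that $Sym(A^2)$ acting on overlapping coordinate pairs generates a $3$-transitive subgroup of $Sym(A^3)$ is precisely the statement to be proved, not a known fact. The genuinely hard part --- transitivity of the two-point stabiliser in the presence of collisions, which you correctly identify as the obstacle --- is deferred to ``careful case analysis'' and never carried out; for $\vert A\vert=3$ in particular this is exactly where a naive choice of gates fails.

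The paper's proof avoids the stabiliser machinery entirely. It maps an arbitrary distinct triple $a,b,c$ directly to $111,112,113$ by exploiting binary gates as \emph{controlled} permutations: when $a_3,b_3,c_3$ are pairwise distinct, the single element $(a_1a_3\;1a_3)(b_1b_3\;1b_3)(c_1c_3\;1c_3)\in B_2(A)$, applied to coordinates $1$ and $3$, treats the three points differently because the third coordinate acts as a tag; a second such gate on coordinates $2$ and $3$ completes the normalisation. The two degenerate cases (two or all three of $a_3,b_3,c_3$ coinciding) are reduced to the generic one by a further controlled transposition and a wire permutation, using $\vert A\vert\geq 3$ to find a spare symbol. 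If you want to salvage your approach, the missing idea is precisely this use of a coordinate on which the points differ as a control; without it, single coordinate-pair gates cannot separate points that agree on that pair, and no amount of bookkeeping repairs the $3$-cycle step.
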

\begin{proof}
 Let $A=\{1,2,3,\dots\}$. Let $a,b,c \in A^3$ be distinct. We show that we can map these to $111, 112,113 \in A^3$.
 
 Suppose $a_3,b_3,c_3$ all distinct.
 Let $\alpha = (a_1a_3\;1a_3)(b_1b_3\;1b_3)(c_1c_3\;1c_3) \in B_2(A)$.
 Let $\beta = (a_2a_3\;11)(b_2b_3\;12)(c_2c_3\;13) \in B_2(A)$.
 Then $\gamma = (\pi_{(23)} \bullet (\alpha \oplus i_1) \bullet \pi_{(23)}) \bullet (i_1\oplus \beta)$ satisfies the requirements.
 
 Suppose $a_3,b_3,c_3$ contains two values, wlog suppose $a_3=b_3$.
 Let $d \in A - \{a_3,c_3\}$.
 Let $\delta = (a_1a_3\;a_1d) \in B_2(A)$.
 Let $\lambda = \pi_{((23)} \bullet (\delta \oplus i_1) \bullet \pi_{(23)}$.
 Then $\lambda$ will map $a,b,c$ to the situation is the first case.
 
 Suppose $a_3=b_3=c_3$. Then one of $a_1,b_2,c_1$ or $a_2,b_2,c_2$ must contain at least two values, wlog let $a_1,b_2,c_1$ be so.
 Then $\pi_{(13)}$ will give us the first case if it contains three values, the second case if it contains two cases.\qed
\end{proof}

The two following results are only relevant for even $A$.

\begin{lemma}
 \label{lemmaAeven_M3}
 Let $\vert A\vert \geq 4$, $B_1(A),B_2(A) \subset C \leq B(A)$. Then $Alt(A^3) \subseteq C^{[3]}$.
\end{lemma}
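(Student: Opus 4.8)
The plan is to analyse $G:=C^{[3]}=C\cap B_3(A)$ as an abstract group of permutations of the $|A|^3$-element set $A^3$, and to show it is forced to contain $Alt(A^3)$ by the classification of finite multiply transitive groups. Write $m:=|A|$. If $m$ is odd, then Theorem~\ref{theoremAodd} gives $C=B(A)$, so $G=Sym(A^3)\supseteq Alt(A^3)$ and there is nothing to prove; hence assume $m$ is even with $m\geq 4$, so that $|A^3|=m^3\geq 64$.

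Two properties of $G$ drive the argument. \emph{(a) $G$ is $3$-transitive on $A^3$}: since $C$ is an RTA containing $B_1(A)\cup B_2(A)$ it contains the RTA $\langle B_1(A),B_2(A)\rangle$, so $G=C^{[3]}\supseteq\langle B_1(A),B_2(A)\rangle^{[3]}$, and the latter is $3$-transitive on $A^3$ by Lemma~\ref{lemma3trans}; a group containing a $3$-transitive group is $3$-transitive. \emph{(b) $G$ has small minimal degree}: for distinct $x,y\in A^2$ the transposition $(x\,y)$ lies in $Sym(A^2)=B_2(A)\subseteq C$, so $(x\,y)\oplus i_1\in C\cap B_3(A)=G$, and on $A^3=A^2\times A$ this element is the product of the $m$ disjoint transpositions $\bigl((x,c),(y,c)\bigr)$, $c\in A$, so it moves exactly $2m$ points. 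Thus the minimal degree of $G$ — the least number of points moved by a nonidentity element — is at most $2m$; since passing to a subgroup can only increase the minimal degree, every permutation group inside which $G$ sits also has minimal degree at most $2m$.

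Now I would invoke the classification of finite $3$-transitive permutation groups. As $m^3\geq 64>24$, $G$ is not a Mathieu group. If $G\supseteq Alt(A^3)$ we are done, so suppose not; then $G$ is either affine, with $G\leq AGL_d(2)$ and $m^3=2^d$, or almost simple with socle $PSL_2(q)$, in which case $G\leq \operatorname{Aut}(PSL_2(q))=P\Gamma L_2(q)$ and $m^3=q+1$. The projective possibility cannot occur for even $m\geq 4$: here $q=m^3-1=(m-1)(m^2+m+1)$ is a product of two integers greater than $1$ whose greatest common divisor divides $3$, and a short elementary argument (if this product is $p^k$ then $p\mid 3$, forcing $m\equiv 1\pmod 3$, whereupon $m^2+m+1$ is exactly $3$ times an integer coprime to $3$, impossible for $m\geq 3$) shows $m^3-1$ is never a prime power. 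In the affine case we would have $m=2^e$ with $e\geq 2$ and $d=3e$; but a nonidentity affine transformation of $GF(2)^d$ fixes either no point or a coset of a subspace of dimension at most $d-1$, hence moves at least $2^{d-1}=m^3/2$ points, so every permutation group inside $AGL_d(2)$ has minimal degree at least $m^3/2$, which exceeds $2m$ for $m\geq 4$ — contradicting (b). Therefore $G\supseteq Alt(A^3)$.

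The only substantial ingredient here is the classification of $3$-transitive groups (one could use the longer but equally standard $2$-transitive classification instead); everything else is routine. The step I would be most careful about is the bookkeeping on the degree $m^3$ — checking that the Mathieu degrees $\{11,12,22,23,24\}$ are too small, that the affine degrees $2^d$ force $m$ to be a power of $2$ and are then killed by the minimal-degree estimate, and that the projective degrees $q+1$ do not arise at all for even $m\geq 4$ because $m^3-1$ is never a prime power. An alternative that sidesteps $P\Gamma L_2(q)$ and $AGL_d(2)$ alike is to note that $G$ contains $B_2(A)\oplus i_1\cong Sym(A^2)$, of order $(m^2)!$, which is far too large to embed in either family once $m\geq 4$; but the minimal-degree argument is shorter and more uniform.
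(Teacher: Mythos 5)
Your proof is correct, but it takes a genuinely different route from the paper's. The paper argues by cases: for $\vert A\vert = 4$ it verifies the claim by a GAP computation, for $\vert A\vert = 5$ it invokes Theorem~\ref{theoremAodd}, and for $\vert A\vert \geq 6$ it uses a generation result (Lemma 18 of \cite{bks17}) to produce the $2$-controlled $3$-cycle $(111\;112\;113)$ inside $C$, then uses the $3$-transitivity from Lemma~\ref{lemma3trans} to conjugate this to every $3$-cycle of $Sym(A^3)$, which generates $Alt(A^3)$. You use the same $3$-transitivity input but combine it instead with a minimal-degree observation --- the element $(x\,y)\oplus i_1$ moves only $2\vert A\vert$ of the $\vert A\vert^3$ points --- and then appeal to the classification of finite $3$-transitive groups to eliminate every candidate not containing the alternating group. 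Your bookkeeping checks out: the Mathieu degrees are below $64$; $m^3-1=(m-1)(m^2+m+1)$ is indeed never a prime power for $m\geq 4$ by your gcd argument, which kills the $P\Gamma L_2(q)$ case; and every nonidentity element of $AGL_d(2)$ moves at least $2^{d-1}=m^3/2>2\vert A\vert$ points, which kills the affine case. What your approach buys is uniformity --- no separate machine computation at $\vert A\vert=4$ and no reliance on the external controlled-permutation lemma --- at the cost of importing the classification of $3$-transitive groups, which depends on the classification of finite simple groups; note, however, that classical pre-classification bounds on the minimal degree of doubly transitive groups not containing the alternating group (Bochert, Jordan) are already far larger than $2\vert A\vert$ for degree $\vert A\vert^3$, so even that dependence is avoidable from your step (b). The paper's constructive route has the side benefit of making visible exactly where the argument breaks at $\vert A\vert=2$ (where the conclusion genuinely fails, as the paper notes after the lemma); your argument does not address that case, but the hypothesis excludes it anyway.
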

\begin{proof}
 For $\vert A \vert = 4$, the result is shown by calculation in GAP \cite{GAP4} that the orders agree. 
 
 For $\vert A \vert = 5$ the result follows from Theorem \ref{theoremAodd}.

 Suppose $\vert A \vert \geq 6$ Since $B_2(A) \subseteq C$, we have all 1-controlled permutations of $A$ in $C$.
 By \cite{bks17} Lemma 18, with $P\subset Alt(A)$ the set of all 3-cycles, we have all 2-controlled 3-cycles in $C$.
 Thus $(111\; 112\;113) \in C$.
  $B_1(A)\cup B_2(A)$ is 3-transitive on $A^3$ by Lemma \ref{lemma3trans}, so we have all 3-cycles in $C$, 
 so $Alt(A^3) \subseteq C$.\qed
\end{proof}

We know that this is not true for $A$ of order 2, where $B_2(A)$ generates a group
of order 1344 in $B_3(A)$, which is of index 15 in $Alt(A^3)$ and is included in no
other subgroup of $B_3(A)$. However we find the following.

\begin{lemma}
 \label{lemmaAeven_M4}
 Let $\vert A\vert$ be even, $B_1(A),B_2(A),B_3(A) \subset C \leq B(A)$. Then $Alt(A^4) \subseteq C^{[4]}$.
\end{lemma}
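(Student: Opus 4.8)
The plan is to bootstrap from the arity-3 data up to arity 4, using the same controlled-gate machinery that powers Lemma~\ref{lemmaAeven_M3} and Theorem~\ref{theoremAeven_BKS}. Since $B_3(A)\subseteq C$, in particular $Alt(A^3)\subseteq C^{[3]}$ (by Lemma~\ref{lemmaAeven_M3}, or trivially since $B_3(A)\subseteq C$), so $C$ contains \emph{all} $1$-controlled permutations of $A^3$: gates of the form $x\oplus g$ where $g$ ranges over $B_3(A)$ acting conditionally on the first coordinate. The first step is to feed these into \cite{bks17}~Lemma~18, taking $P$ to be a generating set of $3$-cycles of $A^3$ (e.g.\ all $3$-cycles, which lie in $Alt(A^3)$), to conclude that $C$ contains all $1$-controlled $3$-cycles on $A^4$ where the control is one fixed coordinate. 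Composing over the available controls and using the wire permutations $\Pi$ to move the control coordinate, one obtains in $C$ every $3$-cycle of $A^4$ of the form $(uaw\;ubw\;ucw)$, i.e.\ every ``short'' $3$-cycle acting on a single coordinate with the other three held to a fixed pattern.

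The second step is a transitivity argument: I would show that $\langle B_1(A),B_2(A),B_3(A)\rangle$ — and hence $C$ — acts $3$-transitively on $A^4$, mirroring Lemma~\ref{lemma3trans} but one arity higher. One expects the same kind of case analysis on how many distinct values appear in the last coordinate of the three target points; the new subtlety is that to reduce to the ``all last coordinates distinct'' case one now has controlled permutations of $A^3$ (not just of $A$) available as the conditioning gate $\delta$, which gives more room to separate points. Granting $3$-transitivity on $A^4$, conjugating a single explicit $3$-cycle (say $(1111\;1112\;1113)\in C$, produced by the first step) by elements of $C$ yields \emph{all} $3$-cycles of $A^4$, and $3$-cycles generate $Alt(A^4)$, so $Alt(A^4)\subseteq C^{[4]}$.

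The evenness hypothesis enters only to guarantee that all these $3$-cycles, and the generating gates themselves, genuinely lie inside $C$ rather than being blocked by a parity obstruction: for $|A|$ even and arity $\ge 3$, the relevant controlled and wire-permutation gates are even (cf.\ Lemmas~\ref{lemmaeven3} and \ref{lemmaeven4}), so nothing forces $C^{[4]}$ to be trapped in a proper subgroup the way $B_2(A)$ is trapped at arity $3$ over a $2$-element alphabet. For the small cases $|A|=4$ (and $|A|=6$, if the lemma of \cite{bks17} has a size threshold) I would fall back on a direct GAP computation as in Lemma~\ref{lemmaAeven_M3}, checking that the subgroup generated has order $|Alt(A^4)|$.

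The main obstacle I anticipate is the $3$-transitivity step on $A^4$: pushing the Lemma~\ref{lemma3trans} argument up one dimension requires care in the degenerate cases (two or three of the targets sharing a coordinate value), and one must verify that the ``helper'' permutations used to break ties are themselves reachable inside $C$ using only $B_1(A)$, $B_2(A)$, $B_3(A)$ and wire permutations. Everything after that — generating $Alt(A^4)$ from one $3$-cycle plus $3$-transitivity — is routine.
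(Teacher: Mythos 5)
Your overall strategy for $|A|\geq 4$ (controlled $3$-cycles via \cite{bks17} Lemma~18, then a transitivity argument to conjugate one explicit $3$-cycle into all of them) is essentially the paper's strategy, which is stated there only as ``the same techniques as in Lemma~\ref{lemmaAeven_M3}''. But there are two genuine problems. First, your opening claim that $C$ contains \emph{all} $1$-controlled permutations of $A^3$ does not follow from the hypotheses and is false in general: such a gate is an element of $B_4(A)$, not of $B_3(A)$, so it is not supplied by $B_3(A)\subseteq C$, and it cannot be derived either --- a transposition of $A^3$ applied under a single control value is a single transposition of $A^4$, hence an odd permutation of $A^4$, whereas the lemma's conclusion only places $Alt(A^4)$ inside $C^{[4]}$ (and there are maximal RTAs with $M^{[4]}=Alt(A^4)$, so odd $4$-ary gates genuinely need not be present). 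The step that actually works is the one from the proof of Lemma~\ref{lemmaAeven_M3} shifted up by one arity: $B_3(A)\subseteq C$ gives all $2$-controlled permutations of $A$ (these really are $3$-ary gates), and \cite{bks17} Lemma~18 with $P$ the $3$-cycles of $A$ then yields $3$-controlled $3$-cycles such as $(1111\;1112\;1113)$.

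Second, and more seriously, you never treat $|A|=2$, which is the one even case that resists this machinery: $Alt(A)$ is trivial, so there are no $3$-cycles of $A$ to control, and a transitivity argument in the style of Lemma~\ref{lemma3trans} needs $|A|\geq 3$ at several points (e.g.\ choosing $d\in A-\{a_3,c_3\}$). The paper flags exactly this failure at arity $3$, where $B_2(2)$ generates only a group of order $1344$, and for arity $4$ it replaces the general argument by an explicit computation in $S_{16}$: the wire permutations and $B_3(A)$ acting on three of the four coordinates are written down as permutations of $\{1,\dots,16\}$ and checked to generate $A_{16}$. Your proposed computational fallbacks target $|A|=4$ and $6$, which are precisely the sizes where the general argument already goes through; the case that must be computed is the one you left out. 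With the controlled-gate step corrected and an explicit verification for $|A|=2$ added, the proof closes.
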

\begin{proof}
 For $A$ of order 4 or more, we use the same techniques as in Lemma \ref{lemmaAeven_M3}.
 
 For $A$ of order 2, we calculate.
 We look at $C^{[4]}$ as a subgroup of $S_{16}$.
 The wire permutations are generated by permutations 
 $(2,9,5,3)(4,10,13,7)(6,11)(8,12,14,15)$ and $(5,9)(6,10)(7,11)(8,12)$.
 $B_3(A)$ as a subgroup of $B_4(A)$ acting on the indices 2,3,4 is generated
 by
 $(1,2,3,4,5,6,7,8)(9,10,11,12,13,14,15,16)$ and $(1,2)(9,10)$.
 It is a simple calculation to determine that this group is the entire alternating group $A_{16}$, so $Alt(A^4)\subseteq C^{[4]}$. \qed
\end{proof}

We can now state our main theorem.

\begin{theorem}
\label{theoremmax}
 Let  $A$ be a finite set. Let $M$ be a maximal sub RTA of $B(A)$.
 Then $M^{[i]} \neq B_i(A)$ for exactly one $i$ and $M$ belongs to the following classes:
  \begin{enumerate}
   \item $i=1$ and $M^{[1]}$ is one of the classes in Theorem \ref{maxsubgp}.
   \item $i=2$, $\vert A \vert =3$,  and  $M^{[2]}= AGL_2(3)$  (up to conjugacy)
   \item $i=2$, $\vert A \vert \geq 5$ is odd  and $M^{[2]}= S_A wr S_2$ 
   \item $i=2$, $\vert A \vert \equiv 2 \mod 4$  and $M^{[2]}=S_A wr S_2$ 
   \item $i=2$, $\vert A \vert \equiv 0 \mod 4$  and $M^{[2]}=Alt(A^2)$ 
   \item $i=2$, $\vert A \vert \equiv 0 \mod 4$  and $M^{[2]} = T^{(3)}.(Out(T) \times S_{3})$ where $T$ is a finite nonabelian simple group, with $\vert A \vert = \vert T \vert$ (up to conjugacy)
   \item $i=2$, $\vert A \vert \equiv 0 \mod 4$  and  $M^{[2]}$ is an almost simple group (up to conjugacy)
   \item $i\geq 3$, $\vert A \vert$ is even  and $M^{[i]}  = Alt(A^i)$ 
  \end{enumerate}
\end{theorem}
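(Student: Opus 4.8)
The plan is to prove that a maximal RTA is determined by its behaviour at a single arity, and then to read off the eight cases from Theorem \ref{maxsubgp}. \emph{Step 1: a single offending arity.} Since $M\neq B(A)$ there is a least $a$ with $M^{[a]}\neq B_a(A)$, and then $M^{[j]}=B_j(A)$ for all $j<a$, so $M$ contains $\langle B_1(A),\dots,B_{a-1}(A)\rangle$ and in particular $G_a:=\langle B_1(A),\dots,B_{a-1}(A)\rangle^{[a]}\subseteq M^{[a]}$. Pick a maximal subgroup $K$ of $Sym(A^a)$ with $M^{[a]}\subseteq K$, and let $N$ be the system that equals $B_j(A)$ at every arity $j\neq a$ and equals $K$ at arity $a$. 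The one routine point is that $N$ is an RTA: composition (with padding) and parallel composition never lower arity, so nothing of arity larger than $a$ contributes at arity $a$; every gate of arity smaller than $a$, padded up to arity $a$, lies in $G_a\subseteq K$, which also contains $\Pi^{[a]}$ since $\langle B_1(A)\rangle$ is an RTA; and closure at the other arities is immediate. So $N$ is a proper RTA with $M\subseteq N$, and maximality of $M$ forces $M=N$. Hence $M^{[j]}=B_j(A)$ for all $j\neq a$, so $a$ is the unique offending arity, and $M^{[a]}=K$ is a maximal subgroup of $Sym(A^a)$ containing $G_a$.

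\emph{Step 2: arities $1$ and $2$.} If $a=1$ then $G_1$ is trivial, so $M^{[1]}$ is just a maximal subgroup of $S_A$ and Theorem \ref{maxsubgp} gives case (1). If $a=2$, then $B_1(A)\subseteq M$ forces $M^{[2]}$ to be a maximal subgroup of $Sym(A^2)$ containing $G_2=\langle B_1(A)\rangle^{[2]}$, and $G_2$ is generated by $S_A$ acting on each of the two coordinates of $A^2$ together with the swap $\pi_{(12)}$, i.e.\ it is $S_A wr S_2$ in its product action on $A^2$. I would then run through the seven families of Theorem \ref{maxsubgp} with $n=|A|^2$. Intransitive subgroups are ruled out because $S_A wr S_2$ is transitive; imprimitive subgroups $S_m wr S_k$ are ruled out for $|A|\geq 3$, since there $S_A$ is $2$-transitive and not regular, so the product action of $S_A wr S_2$ is primitive. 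The alternating subgroup $Alt(A^2)$ contains $S_A wr S_2$ exactly when $4\mid|A|$, by Lemma \ref{lemmaeven4} (and because a coordinate transposition is odd for odd $|A|$), giving case (5). In the affine family $AGL_k(p)$ with $p^k=|A|^2$, the containment $S_A wr S_2\leq AGL_k(p)$ forces $S_{|A|}$ to embed in the one dimensional affine group over the field of order $|A|$, which only happens for $|A|\leq 3$: $|A|=2$ gives the whole of $S_4$, while $|A|=3$ gives $S_3 wr S_2\leq AGL_2(3)$ and case (2), with the short list of primitive groups of degree $9$ showing $AGL_2(3)$ is the only such maximal subgroup. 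The genuine wreath family produces $S_A wr S_2$ itself as a maximal subgroup precisely when $|A|\geq 5$ and $4$ does not divide $|A|$ (cases (3) and (4)), the other wreath factorisations of $|A|^2$ being too small to contain $S_A wr S_2$, and $|A|=2$ falls under case (4) because $S_2 wr S_2=D_8$ is maximal in $S_4$. What is left are the diagonal and almost simple families, which can contain $S_A wr S_2$ only when it is itself not maximal, forcing $4\mid|A|$; these give the residual cases (6) and (7).

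\emph{Step 3: arity at least $3$.} Here $M^{[1]}=B_1(A)$ and $M^{[2]}=B_2(A)$, so by Theorem \ref{theoremAodd} we would have $M=B(A)$ if $|A|$ were odd; hence $|A|$ is even. Lemma \ref{lemmaAeven_M3} (for $a=3$), Lemma \ref{lemmaAeven_M4} (for $a=4$), and Theorem \ref{theoremAeven_BKS} applied to $\langle B_1(A),B_2(A),B_3(A)\rangle$ (for $a\geq 5$) give $Alt(A^a)\subseteq G_a$. Conversely, once $|A|$ is even every generator of $\langle B_1(A),\dots,B_{a-1}(A)\rangle$ acts evenly at arity $a\geq 3$ --- by the parity arguments behind Lemma \ref{lemmaeven3} for the coordinate and wire permutations, together with the fact that a gate of smaller arity padded by at least one identity coordinate has every cycle repeated an even number of times --- so $G_a\subseteq Alt(A^a)$ and hence $G_a=Alt(A^a)$. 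Since $Alt(A^a)$ is already maximal in $Sym(A^a)$, we get $M^{[a]}=Alt(A^a)$, which is case (8). The alphabet $|A|=2$ is checked directly: $G_3=AGL_3(2)$ lies in $Alt(8)$ by Lemma \ref{lemmaaffineeven} and is maximal there, so $M^{[3]}=Alt(8)$, and likewise at higher arities.

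\emph{Main obstacle.} The substantial work is Step 2 for $a=2$: deciding, family by family in the Liebeck--Praeger--Saxl classification, which maximal subgroups of $Sym(A^2)$ contain the product action wreath product $S_A wr S_2$. This turns on primitivity of that action together with somewhat delicate embedding and order estimates for the affine, diagonal and almost simple families, and the small alphabets $|A|=2,3,4$ need individual attention throughout.
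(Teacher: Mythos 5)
Your overall strategy coincides with the paper's: isolate a single offending arity $a$, observe that $M^{[a]}$ must then be a maximal subgroup of $Sym(A^a)$ containing $\langle B_1(A),\dots,B_{a-1}(A)\rangle^{[a]}$, and run through Theorem \ref{maxsubgp}. Your Step 1 packages the uniqueness argument slightly differently (you build the RTA that is full at every arity except $a$, where it equals a maximal overgroup $K$ of $M^{[a]}$, rather than adjoining $B_j(A)$ for a second offending arity $j$ as the paper does), and your primitivity argument invokes the standard product-action criterion where the paper argues directly with an invariant equivalence relation; both are fine and equivalent in effect. Step 3 and most of Step 2 track the paper's Lemmas \ref{lemma3trans}--\ref{lemmaAeven_M4} and Theorems \ref{theoremAodd}, \ref{theoremAeven_BKS} correctly.

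The one step that fails as stated is your exclusion of the affine family at arity $2$. You claim that $S_A wr S_2\leq AGL_k(p)$ forces $S_{\vert A\vert}$ to embed in $AGL_1(\vert A\vert)$ and hence that $\vert A\vert\leq 3$. This is false already for $\vert A\vert=4$: $S_4=AGL_2(2)$, and $S_4 wr S_2$ embeds in $AGL_4(2)$ via block-diagonal linear parts together with the coordinate-swap permutation matrix, while $AGL_1(4)$ has order $12$ and contains no copy of $S_4$. The affine family is only relevant when $\vert A\vert^2$ is a prime power, i.e.\ $\vert A\vert=p^{m}$; for odd $\vert A\vert\geq 5$ it is moot because $S_A wr S_2$ is itself maximal, and for $\vert A\vert=2^m$ with $m\geq 2$ the correct disposal is the paper's Lemma \ref{lemmaaffineeven}: $AGL_{2m}(2)\leq Alt(2^{2m})$, so by the final clause of Theorem \ref{maxsubgp} it is not a maximal subgroup of $Sym(A^2)$ and cannot be $M^{[2]}$. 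With that substitution your argument goes through and reaches the same list of eight classes.
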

\begin{proof}
 Suppose $M < B(A)$ with $i\neq j$ natural numbers such that $M^{[i]} \neq B_i(A)$ and  $M^{[j]} \neq B_j(A)$.
 Wlog, $i< j$, let $N = \langle M \cup B_j(A) \rangle$. 
 Remember that compositions of mappings of arity at least $j$ will also be of arity at least $j$, so $N^{[k]}=M^{[k]}$ for all  $k<j$.
 Then $M < N$ because $N$ contains all of $B_j(A)$ and $N < B(A)$ because $N^{[i]} = M^{[i]} \neq B_i(A)$. Thus $M$ was not maximal, proving our first claim.
 
 For the rest of the proof, take $M$ maximal with $M^{[i]} \neq B_i(A)$.
 Then $M^{[i]}$ is a maximal subgroup of $B_i(A)$.
 
 Suppose $i=1$. Then $B_1(A)= S_A$ and we are interested in the maximal subgroups of $S_A$. From Theorem \ref{maxsubgp} we know that these are in one of the 7 classes.
 
 Suppose $i \geq 2$. Then $S_A^i \leq M^{[i]}$ so $M^{[i]}$ is transitive on $A^i$.
 As $\Pi^{[i]} \leq M^{[i]}$ we also know that $S_A wr S_i \leq M^{[i]}$. 
 Assume $M^{[i]}$ acts imprimitively on $A^i$ with equivalence relation $\rho$.
 Let $a,b\in A^i$, ${a} \rho {b}$ with $a_i \neq b_i$. 
 By the action of $S_A$ acting on the $i$th coordinate we obtain ${a'} \rho {b'}$
 with $a_j=a_j'$ and $b_j = b_j'$ for all $j\neq i$. 
 By the action of $S_i$ on coordinates we can move this inequality to any index.
 Thus by transitivity we can show that $\rho = (A^n)^2$ and is thus trivial, so our action cannot be imprimitive.
 
 We now consider the cases of $A$ odd and even separately.
 
 Suppose $i\geq 2$ and $\vert A \vert$ is odd. If $i\geq 3$ then 
 $M^{[1]}=B_1(A)$ and $M^{[2]}=B_2(A)$,
 so by Theorem \ref{theoremAodd} we have all of $B(A)$ and thus $M$ is not maximal, a contradiction.
 Thus we have $i=2$. 
 $M^{[1]} = B_1(A)=S_A$ and $\pi_{(1\;2)} \in M$ so $M$ contains $S_A wr S_2$. 
 If $\vert A \vert \geq 5$ then by  Theorem \ref{maxsubgp} this is maximal
 in $Sym(A^2)$ so $M^{[2]}$ must be precisely this.  
 So the case of $A$ order 3 is left. 
 We want to know which maximal subgroups of $Sym(A^2)$ contain $S_A wr S_2$.
 There are 7 classes of maximal subgroups, we deal with them in turn.
 \begin{itemize}
  \item  Since $\pi_{(1\,2)} \in M$ is odd on $A^2$,  $M^{[2]} \not \subseteq Alt(A^2)$.
  \item From the discussion above we know that $M^{[2]}$ is transitive and  primitive on $A^2$, so the second and third cases do not apply.
  \item The permutations in $S_3$ can be written as affine maps in $\Z_3$ and $\pi_{(1\,2)}$ can be written as $\begin{bmatrix}0 & 1\\ 1 & 0\end{bmatrix}$, 
  the off diagonal $2 \times 2$ matrix over $\Z_3$, so 
  $S_3 wr S_2$ embeds in the affine general linear group. Thus $M^{[2]} = AGL_2(3)$ is one possibility.
  \item The diagonal case requires $\vert T \vert^{k-1} = 9$ for some nonabelian finite simple group $T$, a contradiction.
  \item The wreath case requires $9 \geq 5^2$, a contradiction.
  \item By \cite{BUEKENHOUT1996215} all $G$ acting primitively on $A^2$ with subgroups that are nonabelian finite simple groups are subgroups
  of $Alt(A^2)$, and we have odd elements in $M$, so this is a contradiction.
 \end{itemize}
Thus the only maximal subgroup is $M^{[2]} = AGL_2(3)$.
 
 Suppose $i\geq 2$ and $\vert A \vert$ is even.
 We know from Theorem \ref{theoremAeven_BKS} that for $i>4$ we can get all of $Alt(A^i)$ 
 from $\cup_{1\leq j < i} B_j(A)$. 
 $Alt(A^i)$ is maximal in $Sym(A^i)$ so we are done.

 
 Thus we are left with 3 cases, $i=2,3,4$. 
 
  From Lemma \ref{lemmaAeven_M4} we know that for $i=4$ , $M^{[4]}=Alt(A^4)$ is
  the only possibility.

 From Lemma \ref{lemmaAeven_M3} we know that for $i=3$ and $\vert A \vert \neq 2$, $M^{[3]}=Alt(A^3)$ is
  the only possibility.
  For $\vert A \vert = 2$ we find that $B_2(A)$ generates a subgroup of $B_3(A)$ that is only included in $Alt(A^3)$, so again $M^{[3]}=Alt(A^3)$ is
  the only possibility.
 
 Thus we are left with the case $i=2$.
 From the above we know that the intransitive and imperfect cases cannot arise.
Thus we need to consider the wreath, affine, diagonal and almost simple cases.
 \begin{itemize}
   \item $\vert A \vert =2$: $S_A wr S_2$ has order 8, $B_2(2)$ has order 24, so $M^{[2]}=S_A wr S_2$
   is maximal and we are done.
   \item Case $6 \leq \vert A \vert \equiv 2 \mod 4$: Lemma \ref{lemmaeven4} above says that $S_A wr S_2 \not \leq Alt(A^2)$ so it is 
   maximal by Theorem \ref{maxsubgp}. 
   \item Case $\vert A \vert =4$: Alternating is possible by inclusion. 
   The affine case $AGL_4(2)$ lies in $A_{16}$ by Lemma \ref{lemmaaffineeven}.
     Diagonal not possible by order. Almost simple not possible because all primitive groups of degree 16 lie in the alternating group $A_{16}$ \cite{BUEKENHOUT1996215} .
   \item Case $8 \leq \vert A \vert \equiv0 \mod 4$: 
    Alternating is always possible. If $A=2^m$ for some $m$, then $AGL_m(2)$ might be possible, but lies in $Alt(A^2)$ by Lemma \ref{lemmaaffineeven}. 
    Diagonal, almost simple might be possible, if $S_A wr S_2 \leq M^{[2]}$.
 \end{itemize}
\qed
\end{proof}

\begin{corollary}
 For $A$ of odd order, there is a finite number of maximal RTAs.
\end{corollary}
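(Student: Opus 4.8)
The plan is to derive the corollary from Theorem~\ref{theoremmax} by a counting argument: for a fixed set $A$, I will show that every maximal sub-RTA $M$ of $B(A)$ is determined by the pair $(i,M^{[i]})$, where $i$ is the unique arity with $M^{[i]}\neq B_i(A)$, and then that when $|A|$ is odd only finitely many such pairs occur. The first step is essentially formal: as a set $M=\bigcup_{k\in\N}M^{[k]}$, so $M=\langle\bigcup_k M^{[k]}\rangle$; by Theorem~\ref{theoremmax} there is a single $i$ with $M^{[i]}\neq B_i(A)$, hence $M^{[k]}=B_k(A)$ for every $k\neq i$, and therefore $M=\langle M^{[i]}\cup\bigcup_{k\neq i}B_k(A)\rangle$. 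Consequently the assignment $M\mapsto(i,M^{[i]})$ is injective on the maximal sub-RTAs of $B(A)$.

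It then remains to bound the image of this assignment when $|A|$ is odd. Here cases $4$--$8$ of Theorem~\ref{theoremmax} are vacuous, since each of them requires $|A|$ to be even, so the only possibilities are $i=1$ and $i=2$; thus $i$ takes at most two values. For each such $i$ the second coordinate $M^{[i]}$ is a subgroup of the finite group $B_i(A)=Sym(A^i)$, and a finite group has only finitely many subgroups. Hence the set of admissible pairs $(i,M^{[i]})$ is finite, and by the injectivity established above the number of maximal RTAs over $A$ is finite.

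I do not expect a genuine obstacle, since the content lies entirely in Theorem~\ref{theoremmax}; the one point that deserves care is the claim that $M$ is reconstructed from $(i,M^{[i]})$, i.e.\ that forming the RTA generated by $M^{[i]}$ together with all gates of arity $\neq i$ does not produce new arity-$i$ gates outside $M^{[i]}$. This is immediate: the RTA so generated is contained in $M$, because all of its generators lie in $M$, so in arity $i$ it is contained in $M^{[i]}$; the reverse inclusion is obvious, so the two RTAs coincide. (If one wished, this same observation together with the bound $i\leq 2$ --- which already follows from Theorem~\ref{theoremAodd}, since a maximal RTA with $i\geq 3$ would have $B_1(A),B_2(A)\subseteq M$ and hence $M=B(A)$ --- would suffice, with no appeal to the finer classification of the groups $M^{[i]}$.)
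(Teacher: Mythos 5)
Your proposal is correct and matches the paper's (implicit) reasoning: the corollary is stated without proof as an immediate consequence of Theorem~\ref{theoremmax}, and your argument---that a maximal $M$ is determined by the pair $(i,M^{[i]})$, that only $i=1,2$ occur for odd $|A|$, and that $Sym(A^i)$ has finitely many subgroups---is exactly the intended justification. The reconstruction step $M=M^{[i]}\cup\bigcup_{k\neq i}B_k(A)$ is handled correctly, so there is nothing to add.
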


\subsection{Dealing with the Almost simple and Diagonal cases}

We are left with the possibility for $A$ of order a multiple of 4, that the diagonal or almost simple
case can arise.

The diagonal case with $A$ of order equal to the order of a finite simple nonabelian group start with order 60.
The other possibility is that $\vert A\vert^2 =\vert T \vert$ for some finite simple nonabelian group $T$.
The only known result in this direction is in \cite{nsw80} where they show that 
symplectic groups  $Sp(4,p)$ where $p$ is a certain type of prime, now known as NSW primes, have square order.
The first of these groups is of order $(2^4 \cdot 3\cdot 5\cdot 7^2)^2$ 
corresponding to $A$ of order $(2^4\cdot 3\cdot 5\cdot 7^2)=11760$.
We note that the sporadic simple groups have order that always contains a prime to the power one, so they are not of square order.
We know that the Alternating group can never have order that is a square, 
as the highest prime less than $n$ will occur exactly once in the order of the group.
It might be possible that there are other finite simple groups of square order. As far as we are aware, there have been no further results in this direction.

Each of these possibilities is far beyond the expected useful arities for computational processes.

The other case is to look at almost simple groups.
In the above we saw that all primitive actions of degree $4^2$ are alternating. 
In order to carry one,
  we can hope to use results about primitive permutation groups of prime power \cite{CaiZhangPrimePowerDegree}
 and product of two prime power \cite{LiLiPrimitiveTwoPrimePowers} degrees. With these
 we should be able to determine almost simple examples up to degree 30. Once agin this
 would include all examples of arities expected to be useful for computational processes.

\section{Borrow and Ancilla closure}

The strength of Theorem \ref{theoremmax} is partially due to the fact that there is no
effect of the existence of mappings of a certain arity in a given RTA on the size of the lower arity part, as
there are no operators to lower the arity of a mapping. 
This does not apply with ancilla and borrow closure. 
In this section we collect some  results about maximal ancilla and borrow closed RTAs.
The following result reflects the first part of Theorem \ref{theoremmax}.

\begin{lemma}
\label{lemmamaxba_closure}
 Let $M \leq B(A)$ be a maximal borrow or ancilla closed RTA.
 Then there exists  some $k\in \N$ such that for all $i< k$, $M^{[i]}=B_i(A)$
 and for all $i \geq k$, $M^{[i]} \neq B_i(A)$.
\end{lemma}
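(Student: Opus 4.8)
The plan is to show that the set of arities $i$ at which $M^{[i]}\neq B_i(A)$ is upward closed, which is equivalent to the statement. Suppose for contradiction that there are arities $i<j$ with $M^{[i]}\neq B_i(A)$ but $M^{[j]}=B_j(A)$; among all such pairs choose one with $j$ minimal, so that $M^{[m]}\neq B_m(A)$ for all $m$ with $i\leq m<j$ (in particular at $m=j-1$). I would then try to derive a contradiction by producing, inside $M$, an element of arity $j-1$ (or of some arity $m$ with $i\leq m<j$) that is not already in $M^{[m]}$, using the fact that $M$ contains all of $B_j(A)$ together with the borrow/ancilla closure operator, which is exactly the tool that lowers arity — the tool that was absent in Theorem \ref{theoremmax}.

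Concretely: take any $f\in B_{j-1}(A)$. Then $f\oplus i_1\in B_j(A)\subseteq M^{[j]}$. If $M$ is borrow closed, this immediately forces $f\in M^{[j-1]}$, and since $f$ was arbitrary, $M^{[j-1]}=B_{j-1}(A)$, contradicting $M^{[j-1]}\neq B_{j-1}(A)$. If $M$ is only ancilla closed, the same conclusion follows because ancilla closure also yields the arity drop: given $f\in B_{j-1}(A)$, the map $g=f\oplus i_1\in B_j(A)\subseteq M$ satisfies the ancilla condition with any fixed $a\in A$ (the last coordinate is left untouched, $g_j(x_1,\dots,x_{j-1},a)=a$, and $g_m(x_1,\dots,x_{j-1},a)=f_m(x_1,\dots,x_{j-1})$ for $m<j$), hence $f\in M$. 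So in either case $M^{[j-1]}=B_{j-1}(A)$, the desired contradiction. This shows the bad-arity set is upward closed; set $k=\min\{i : M^{[i]}\neq B_i(A)\}$ (with $k$ undefined, i.e. $M=B(A)$, excluded since $M$ is a proper maximal RTA) and the lemma follows.

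I do not expect a serious obstacle here; the one point that needs care is checking that the ancilla-closure definition as stated really does apply to $g=f\oplus i_1$ — i.e. that padding by an identity wire is exactly the degenerate instance of "having an ancilla bit that returns to its input value" — and that this is legitimate for the chosen witness $a$. A second, essentially cosmetic, point is to note that $M\neq B(A)$ (so $k$ exists) because a maximal proper sub-RTA is by definition proper, and that $M$ being borrow or ancilla closed is preserved under nothing extra is needed — we only use the closure property of $M$ itself, not of any larger object. If one wanted to also record that $k\geq 1$ is the only constraint, that is automatic. The heart of the argument is just the observation that borrow/ancilla closure is precisely an arity-lowering operation, so "full at arity $j$" propagates downward to "full at every arity below $j$".
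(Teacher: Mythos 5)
Your proposal is correct and follows essentially the same route as the paper: fullness at arity $j$ is pushed down to lower arities via $f\oplus i_1\in B_j(A)\subseteq M$ together with borrow (or ancilla) closure, and maximality guarantees $M\neq B(A)$ so the threshold $k$ exists. Your explicit check that $f\oplus i_1$ satisfies the ancilla condition is a nice touch the paper leaves implicit (it earlier notes ancilla closure implies borrow closure, so either way the step is sound).
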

\begin{proof}
 Suppose $M^{[k]}=B_k(A)$. Then for all $f\in B_m(A)$, $m<k$, $f\oplus i_{k-m} \in M$ so
 $f\in M$, so $M^{[m]}=B_m(A)$ for all $m\leq k$.
 As $M$ is maximal, there must be a largest $k$ for which $M^{[k]}=B_k(A)$, since otherwise 
 $M=B(A)$.\qed
\end{proof}

From Theorem \ref{theoremAodd} we then note the following.
\begin{lemma}
\label{lemma_ba_Aodd}
 Let $\vert A\vert$ be odd. Then $k =1,2$ are the only options in  Lemma \ref{lemmamaxba_closure}.
\end{lemma}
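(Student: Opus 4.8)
The plan is to combine Lemma \ref{lemmamaxba_closure} with Theorem \ref{theoremAodd}. By Lemma \ref{lemmamaxba_closure}, a maximal borrow or ancilla closed RTA $M$ has a threshold $k$ with $M^{[i]}=B_i(A)$ for all $i<k$ and $M^{[i]}\neq B_i(A)$ for all $i\geq k$. I want to rule out $k\geq 3$. Suppose $k\geq 3$. Then in particular $M^{[1]}=B_1(A)$ and $M^{[2]}=B_2(A)$, so $B_1(A),B_2(A)\subseteq M$. Since $\vert A\vert$ is odd, Theorem \ref{theoremAodd} gives $M=B(A)$, contradicting that $M$ is a proper (maximal) subRTA. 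Hence $k\leq 2$, i.e. $k\in\{1,2\}$.

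The remaining point is to confirm that both $k=1$ and $k=2$ genuinely occur, so that neither is vacuously excluded; otherwise the phrasing ``$k=1,2$ are the only options'' would be slightly misleading. For $k=1$ one takes a maximal subgroup $G$ of $S_A$ (e.g. $A_{\vert A\vert}$, which is not borrow-trivial in the relevant sense) and forms the largest borrow/ancilla closed RTA $M$ with $M^{[1]}=G$; since $G\neq S_A$, $M$ is proper, and a maximality argument inside the lattice of borrow/ancilla closed RTAs below $B(A)$ with this constraint produces a maximal one with threshold $1$. For $k=2$ one instead starts from $M^{[1]}=B_1(A)=S_A$ but $M^{[2]}=AGL_2(3)$ when $\vert A\vert=3$, or $M^{[2]}=S_A wr S_2$ when $\vert A\vert\geq 5$ is odd (the options from Theorem \ref{theoremmax}); one again closes up under borrow/ancilla and maximises, obtaining threshold exactly $2$ because the arity-$1$ part is already full and the arity-$2$ part is a maximal subgroup of $B_2(A)$.

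The main obstacle is the second half: one must check that taking such a constrained RTA and enlarging it to a maximal borrow/ancilla closed RTA does not accidentally force the lower-arity part to grow (which cannot happen here, since borrow and ancilla closure only propagate \emph{downward} in arity, and the upper-arity parts are unconstrained) and does not collapse the threshold. Concretely, one should verify that the borrow/ancilla closure of $\langle G\cup\bigcup_{m\geq 2}B_m(A)\rangle$ (resp. $\langle S_A\cup H\cup\bigcup_{m\geq 3}B_m(A)\rangle$ with $H$ a maximal subgroup of $B_2(A)$) is still proper and has the claimed threshold; a short Zorn's-lemma argument then extracts a maximal member. Since the forward direction (ruling out $k\geq 3$) is immediate from Theorem \ref{theoremAodd}, I expect the write-up to be brief, with the bulk of the care going into the realisability of $k=1$ and $k=2$.
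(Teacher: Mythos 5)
Your first paragraph is exactly the paper's argument: the paper presents this lemma as an immediate consequence of Theorem \ref{theoremAodd}, and your contradiction for $k\geq 3$ (namely $B_1(A),B_2(A)\subseteq M$ forces $M=B(A)$ when $\vert A\vert$ is odd) is correct and complete. For the statement as written, which is purely an exclusion claim, you could stop there.

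The realisability discussion in your second and third paragraphs is not required by the lemma, and the construction you sketch for $k=1$ is broken. Borrow and ancilla closure propagate membership \emph{downward} in arity: if $f\oplus i_1$ lies in a borrow closed RTA then so does $f$. Consequently the borrow closure of $\langle G\cup\bigcup_{m\geq 2}B_m(A)\rangle$ contains $f\oplus i_1$ for every $f\in B_1(A)$ (all of these lie in $B_2(A)$), hence contains all of $B_1(A)=S_A$; the arity-one part is forced up to $S_A$, the intended threshold $k=1$ collapses, and the closure is in fact all of $B(A)$. Your parenthetical assertion that downward propagation means the lower-arity part ``cannot'' grow has the implication pointing the wrong way: it is precisely adding unconstrained higher-arity elements that enlarges the lower arities. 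The paper handles realisability separately by exhibiting concrete closed objects rather than closing up constrained generating sets ($\operatorname{Aff}(A)$ for $\vert A\vert=3$ and $Deg(A)$ for $\vert A\vert\geq 5$, both with $k=2$); whether $k=1$ actually occurs for odd $A$ is not needed here and is left by the paper as an open direction in the ancilla case.
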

In this case, we can say a bit more in case $k=2$.
If $A$ is of order 3, then by the argument in Theorem \ref{theoremmax} above, we find that $M = \operatorname{Aff}(A)$, the
affine maps over a field of order 3.
Otherwise $A$ is at least 5 and $B_1(A)$ is no longer affine. See the result below.

Similarly, we obtain the following, but see Corollary \ref{cor_lemma_ba_Aeven4} below.
\begin{lemma}
\label{lemma_ba_Aeven4}
 Let $\vert A\vert \geq 4$ be even. Then in Lemma \ref{lemmamaxba_closure}, $k=1,2,3$ are the only options and for $i>k$, $M^{[i]}\neq Alt(A^i)$.
\end{lemma}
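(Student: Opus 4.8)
The plan is to run everything through borrow closure, which holds in both cases since ancilla closure implies borrow closure. Beyond Lemma~\ref{lemmamaxba_closure}, the only ingredient is the parity observation from the proof of Lemma~\ref{lemmaeven3}: since $|A|$ is even, for every $f\in B_{i-1}(A)$ the gate $f\oplus i_1$ is an \emph{even} permutation of $A^i$, because it acts on $A^i=A^{i-1}\times A$ by applying $f$ independently on each of the $|A|$ slices $A^{i-1}\times\{a\}$, so every cycle of $f$ is repeated the even number of times $|A|$.

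It is convenient to prove the second clause first and then deduce the bound $k\leq 3$ from it. So, with $k$ the value from Lemma~\ref{lemmamaxba_closure}, suppose $M^{[i]}=Alt(A^i)$ for some $i>k$. Then $i\geq 2$ and $i-1\geq k$. For an arbitrary $f\in B_{i-1}(A)$ the parity observation gives $f\oplus i_1\in Alt(A^i)=M^{[i]}\subseteq M$, so borrow closure yields $f\in M^{[i-1]}$; since $f$ was arbitrary, $M^{[i-1]}=B_{i-1}(A)$, contradicting Lemma~\ref{lemmamaxba_closure} as $i-1\geq k$. Hence $M^{[i]}\neq Alt(A^i)$ for all $i>k$.

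For the bound, assume $k\geq 4$. Then $M^{[1]}=B_1(A)$, $M^{[2]}=B_2(A)$ and $M^{[3]}=B_3(A)$, so Lemma~\ref{lemmaAeven_M4} gives $Alt(A^4)\subseteq M^{[4]}\subseteq M$, and Theorem~\ref{theoremAeven_BKS} propagates this to $Alt(A^i)\subseteq M^{[i]}$ for every $i\geq 4$; in particular $Alt(A^{k+1})\subseteq M^{[k+1]}$. Since $k+1>k$, Lemma~\ref{lemmamaxba_closure} gives $M^{[k+1]}\neq B_{k+1}(A)=Sym(A^{k+1})$, and since $Alt(A^{k+1})$ is maximal in $Sym(A^{k+1})$ (as $|A^{k+1}|\geq 2$) we get $M^{[k+1]}=Alt(A^{k+1})$, contradicting the clause just proved. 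Therefore $k\leq 3$. I do not expect a real obstacle here; the only points needing a little care are that $M$ is genuinely borrow closed in the ancilla case (recorded in the Background section), the bookkeeping $i>k\Rightarrow i-1\geq k$, and the triviality that $Alt(A^j)$ has no proper overgroup in $Sym(A^j)$ once $|A^j|\geq 2$. The whole argument is essentially one short deduction, run once to get the ``$\neq Alt$'' clause and once, fed by Lemma~\ref{lemmaAeven_M4} and Theorem~\ref{theoremAeven_BKS}, to rule out $k\geq 4$.
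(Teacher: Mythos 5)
Your proof is correct and follows essentially the same route as the paper: the parity observation that $f\oplus i_1$ lies in $Alt(A^i)$ for even $\vert A\vert$, combined with borrow closure, gives the ``$M^{[i]}\neq Alt(A^i)$'' clause, and Lemma~\ref{lemmaAeven_M4} plus Theorem~\ref{theoremAeven_BKS} rule out $k\geq 4$. The only cosmetic difference is that you close the second argument by citing the first clause via maximality of $Alt(A^{k+1})$ in $Sym(A^{k+1})$, whereas the paper applies the parity-plus-borrow step once more to conclude $M=B(A)$ directly; these are the same deduction.
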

\begin{proof}
  We start by noting that for even $\vert A\vert$, for all $f\in B_i(A)$, 
$f \oplus i_1 \in Alt(A^{i+1})$. Thus if $M^{[i]}= Alt(A^i)$ for some $i>k$, 
then $M^{[i-1]}=B_i(A)$ which is a contradiction, which shows the second part of
the result.

Suppose $k \geq 4$, so $B_1(A),B_2(A),B_3(A) \subseteq M$. Then by Lemma  \ref{lemmaAeven_M4} 
$Alt(A^4) \subseteq M$, so by Theorem \ref{theoremAeven_BKS} $Alt(A^j)\subseteq M$ for all $j\geq 5$.
But we know that by borrow closure, this implies that $B_{j-1}(A) \subseteq M$ 
so $M$ is in fact $B(A)$.
This is a contradiction, so $k< 4$.\qed
\end{proof}

Using the same arguments, we obtain the following.
\begin{lemma}
 Let $\vert A\vert =2$. Then in  Lemma \ref{lemmamaxba_closure}, $k=1,2,3$ are the only options and for $i>k$, $M^{[i]}\neq Alt(A^i)$.
\end{lemma}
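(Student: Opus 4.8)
The plan is to mirror the proof of Lemma \ref{lemma_ba_Aeven4} almost verbatim, since the case $\vert A\vert=2$ differs only in which auxiliary lemmas supply the two ingredients (the ``no $Alt$ above $k$'' fact and the ``$k<4$'' fact). First I would record that for $\vert A\vert=2$, hence even, any $f\in B_i(A)$ has $f\oplus i_1\in Alt(A^{i+1})$, because $f$ acts on the first $2^i$ of the $2^{i+1}$ coordinates and each of its cycles is duplicated an even number of times (namely $\vert A\vert=2$ times), exactly as in the even-order argument. Consequently, if $M^{[i]}=Alt(A^i)$ for some $i>k$, then borrow/ancilla closure forces $M^{[i-1]}=B_{i-1}(A)$, contradicting that $i-1\geq k$; this gives the second assertion, that $M^{[i]}\neq Alt(A^i)$ for $i>k$.

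Next I would bound $k$. Suppose $k\geq 4$, so $B_1(A),B_2(A),B_3(A)\subseteq M$. Here is the one place where the order-$2$ case needs its own input: instead of Lemma \ref{lemmaAeven_M3} (which fails for $\vert A\vert=2$), I invoke Lemma \ref{lemmaAeven_M4} directly, whose statement already covers $\vert A\vert=2$ via the explicit GAP computation showing $\langle B_1,B_2,B_3\rangle^{[4]}=A_{16}$. Thus $Alt(A^4)\subseteq M$. Then Theorem \ref{theoremAeven_BKS} gives $Alt(A^j)\subseteq M$ for all $j\geq 5$ as well. By borrow closure, $Alt(A^{j})\subseteq M$ together with the parallel-identity observation above (every element of $B_{j-1}(A)$ lies in $Alt(A^j)$ after padding) yields $B_{j-1}(A)\subseteq M$ for every $j\geq 4$, i.e. $M^{[m]}=B_m(A)$ for all $m$, so $M=B(A)$ --- contradicting maximality. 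Hence $k\leq 3$, and combined with Lemma \ref{lemmamaxba_closure} this leaves exactly $k\in\{1,2,3\}$.

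I do not expect any genuine obstacle here: the only subtlety is making sure not to reuse Lemma \ref{lemmaAeven_M3} (valid only for $\vert A\vert\geq 4$) and instead routing the argument through Lemma \ref{lemmaAeven_M4}, whose proof explicitly handles the degenerate binary alphabet. Everything else --- the parity count for $f\oplus i_1$, the borrow-closure descent, and the appeal to Theorem \ref{theoremAeven_BKS} --- is identical to the even-order case, so the write-up can be a two-sentence proof pointing at Lemma \ref{lemma_ba_Aeven4}'s argument with ``Lemma \ref{lemmaAeven_M3}'' replaced by ``Lemma \ref{lemmaAeven_M4}''.

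\begin{proof}
 As in Lemma \ref{lemma_ba_Aeven4}, for $\vert A\vert=2$ and any $f\in B_i(A)$ we have $f\oplus i_1\in Alt(A^{i+1})$: each cycle of $f$ on $A^i$ is repeated $\vert A\vert=2$ times in $f\oplus i_1$, an even number, so $f\oplus i_1$ has even parity. Hence if $M^{[i]}=Alt(A^i)$ for some $i>k$, borrow closure gives $M^{[i-1]}=B_{i-1}(A)$, contradicting that $i-1\geq k$. This proves $M^{[i]}\neq Alt(A^i)$ for $i>k$.

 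Now suppose $k\geq 4$, so $B_1(A),B_2(A),B_3(A)\subseteq M$. By Lemma \ref{lemmaAeven_M4} (whose statement includes the case $\vert A\vert=2$), $Alt(A^4)\subseteq M$, and then by Theorem \ref{theoremAeven_BKS}, $Alt(A^j)\subseteq M$ for all $j\geq 5$. By the observation of the previous paragraph, every element of $B_{j-1}(A)$ lies in $Alt(A^j)$ after padding with $i_1$, so borrow closure yields $B_{j-1}(A)\subseteq M$ for all $j\geq 4$; hence $M=B(A)$, a contradiction. Therefore $k<4$, and by Lemma \ref{lemmamaxba_closure} only $k=1,2,3$ are possible.\qed
\end{proof}
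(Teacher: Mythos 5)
Your proof is correct and follows essentially the same route as the paper: the parity observation that $f\oplus i_1$ is even for $\vert A\vert=2$, Lemma \ref{lemmaAeven_M4} to obtain $Alt(A^4)$, Theorem \ref{theoremAeven_BKS} to climb to higher arities, and borrow closure to descend and contradict maximality. The only (harmless) difference is that you handle $k\geq 4$ in a single unified case where the paper treats $k=4$ and $k\geq 5$ separately.
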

\begin{proof}
 Suppose $M$ is maximal with $k \geq 5$.
 Then by Theorem \ref{theoremAeven_BKS} we obtain $M^{[i]}= Alt(A^i)$ for all $i\geq 5$,
 which by the first argument in the previous Lemma, implies that $M$ is not maximal.
 
 Suppose $M$ is maximal with $k=4$. 
 We know that $M^{[3]}=B_3(A)$.
 Then by Lemma \ref{lemmaAeven_M4} we find that $M^{[4]} = Alt(A^4)$,
 so by Theorem \ref{theoremAeven_BKS} we obtain all of $Alt(A^5)$ so by borrow closure
 all of $B_4(A)$ and thus $M$ is not maximal.\qed
\end{proof}

In general we obtain some examples of maximal borrow and ancilla closed RTA.

\begin {lemma}
 For $\vert A\vert \geq 5$, the degenerate RTA $Deg(A)$ generated by $B_1(A)$ is a maximal borrow or ancilla closed RTA.
\end {lemma}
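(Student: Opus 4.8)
The goal is to show that $\mathrm{Deg}(A) = \langle B_1(A)\rangle$ is a maximal borrow/ancilla closed RTA when $|A|\geq 5$. By Lemma~\ref{lemmamaxba_closure}, any borrow or ancilla closed RTA $N$ properly containing $\mathrm{Deg}(A)$ is determined by a threshold $k$; since $\mathrm{Deg}(A)^{[1]} = B_1(A)$ but $\mathrm{Deg}(A)^{[2]}\neq B_2(A)$ (the degenerate RTA contains only parallel compositions of unary maps together with wire permutations, so its binary part is $S_A \wr S_2$, far smaller than $B_2(A)$ once $|A|\geq 2$), the threshold for $\mathrm{Deg}(A)$ itself is $k=1$. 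So it suffices to show that the smallest borrow (resp.\ ancilla) closed RTA properly containing $\mathrm{Deg}(A)$ is already all of $B(A)$; equivalently, once we add \emph{any} gate not in $\mathrm{Deg}(A)$ and close under borrow/ancilla, we recover everything.

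First I would identify $\mathrm{Deg}(A)^{[2]}$ explicitly as $S_A\wr S_2$ acting on $A^2$ (the "degenerate" or "diagonal product" gates), and check that this is a maximal subgroup of $B_2(A) = \mathrm{Sym}(A^2)$ when $|A|\geq 5$ — this is exactly the wreath case of Theorem~\ref{maxsubgp}, valid because $|A|\geq 5$. Hence any RTA $N$ with $\mathrm{Deg}(A) < N$ and $N^{[1]} = B_1(A)$ must have $N^{[2]} = B_2(A)$ (it cannot stop at $S_A\wr S_2$ by maximality of that group, and it cannot jump past $\mathrm{Sym}(A^2)$). But wait — one must be careful: a general RTA containing $\mathrm{Deg}(A)$ might agree with it in arities $1$ and $2$ yet differ in higher arity. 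This is where borrow/ancilla closure is essential: if $N$ is borrow closed and $N\supsetneq\mathrm{Deg}(A)$, and $N^{[k]}\neq B_k(A)$ for the minimal bad $k$, then $N$ contains some $f\in B_{k-1}(A)\setminus\mathrm{Deg}(A)^{[k-1]}$ only if $k-1\geq 2$. So either $N^{[2]} = B_2(A)$ already, or $N$ contains a genuinely higher-arity gate outside $\mathrm{Deg}(A)$. In the first subcase, $B_1(A), B_2(A)\subseteq N$; then for $|A|$ odd, Theorem~\ref{theoremAodd} gives $N = B(A)$ immediately, and for $|A|$ even I would invoke Lemma~\ref{lemmaAeven_M3} to get $Alt(A^3)\subseteq N^{[3]}$, then note borrow closure forces $B_2(A)$ back (already have it), and push upward: Lemma~\ref{lemmaAeven_M4} and Theorem~\ref{theoremAeven_BKS} give $Alt(A^i)\subseteq N$ for all $i\geq 4$, and borrow closure turns each $Alt(A^{i+1})\subseteq N$ into $B_i(A)\subseteq N$ (since for even $|A|$ every element of $B_i(A)$ becomes even after $\oplus\, i_1$), yielding $N = B(A)$.

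The remaining subcase is the main obstacle: $N$ is borrow closed, $N^{[1]} = B_1(A)$, $N^{[2]} = S_A\wr S_2$, but $N$ contains some gate $g\in B_m(A)$, $m\geq 3$, not obtainable from unary gates. Here I would apply borrow closure repeatedly: $g$ need not directly witness $g\oplus i_1\in N \Rightarrow g\in N$, but rather I want to produce a new binary (or unary) gate. The trick is to compose $g$ with unary/wire gates and then strip wires via borrow closure — concretely, post- and pre-composing $g$ with suitable elements of $\mathrm{Deg}(A)$ and then using that $h\oplus i_1\in N \Rightarrow h\in N$ to descend in arity. One shows that from any $g\notin\mathrm{Deg}(A)$ of arity $m$ one can, after such manipulations, extract an element of $B_2(A)\setminus (S_A\wr S_2)$; since $S_A\wr S_2$ is maximal in $\mathrm{Sym}(A^2)$ this forces $N^{[2]} = B_2(A)$, reducing to the previous subcase. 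The cleanest route is probably to argue by contradiction on the minimal arity $m$ at which $N$ exceeds $\mathrm{Deg}(A)$: borrow closure plus the serial/parallel structure lets one "borrow down" a witness of non-degeneracy to arity $m-1$, contradicting minimality unless $m\leq 2$. I expect the bookkeeping for this descent — showing that non-degeneracy genuinely transfers down under borrow closure rather than getting absorbed — to be the delicate point, and for ancilla closure one checks the same descent respects the ancilla condition (which is stronger than borrow, so if anything easier to propagate non-degeneracy). Once $N^{[2]} = B_2(A)$ is forced, the earlier paragraph finishes the proof, so $\mathrm{Deg}(A)$ is maximal. \qed
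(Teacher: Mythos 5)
Your reduction to the case $N^{[2]}=B_2(A)$ is sound and matches the paper's endgame, but the subcase you yourself identify as the obstacle --- $N^{[1]}=B_1(A)$, $N^{[2]}=S_A wr S_2$, yet $N$ first exceeds $Deg(A)$ at some arity $m\geq 3$ --- is left as an unproven sketch, and this is a genuine gap. You propose to ``strip wires via borrow closure'' and descend a witness of non-degeneracy from arity $m$ to arity $m-1$, but borrow closure only removes a coordinate on which a gate acts as the literal identity ($h\oplus i_1\in N \Rightarrow h\in N$), and there is no reason that a non-degenerate $g\in B_m(A)$ can be brought into the form $h\oplus i_1$ with $h\notin Deg(A)^{[m-1]}$ by pre- and post-composing with elements of $Deg(A)$: a generic $g$ entangles all of its coordinates and no amount of composition with coordinatewise maps and wire permutations will free one. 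So the descent is not just ``delicate bookkeeping''; it is the wrong mechanism.

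The paper avoids any descent of a witness. Its key observation is that $Deg(A)^{[i]}=S_A wr S_i$ is a maximal subgroup of $B_i(A)=Sym(A^i)$ at \emph{every} arity $i\geq 2$ (Theorem \ref{maxsubgp}, wreath/imprimitive cases, using $\vert A\vert\geq 5$). Hence at whatever arity $i$ the proper containment $Deg(A)^{[i]}<N^{[i]}$ first occurs, group maximality already forces $N^{[i]}=B_i(A)$ \emph{there}, and then the easy downward direction of borrow closure (as in the proof of Lemma \ref{lemmamaxba_closure}: $B_i(A)\subseteq N$ implies $B_{i-1}(A)\subseteq N$) carries fullness down to $N^{[2]}=B_2(A)$. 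Nothing about the witness itself is transported. Replacing your descent paragraph with this observation repairs the proof; the only remaining care needed (which the paper also glosses over) is that for even $\vert A\vert$ and $i\geq 3$ one has $S_A wr S_i\leq Alt(A^i)$ by Lemma \ref{lemmaeven3}, so one should note that as soon as $N^{[i]}\supseteq Alt(A^i)$ the borrow-closure step still applies because $f\oplus i_1$ is always even for even $\vert A\vert$, and the downward collapse to $B_{i-1}(A)$ goes through.
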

\begin{proof}
 Let $Deg(A)$ be generated by $B_1(A)=S_A$.
 Then $Deg(A)^{[i]}= S_A wr S_i$ for all $i\geq 2$ which is maximal in $B_i(A)$ by Theorem \ref{maxsubgp}.
 Thus any RTA $N$ properly containing $Deg(A)$ will have  $N^{[i]}=B_i(A)$
 for some $i\geq 2$ and thus $N^{[2]}=B_2(A)$ by Lemma \ref{lemmamaxba_closure}. 
 Let $f \in N^{[2]} - Deg(A)^{[2]}$, then $f\oplus f \in N{[4]} - Deg(A)^{[4]}$ so
 $N^{[4]}=B_4(A)$ and thus by Lemmas \ref{lemma_ba_Aodd} and \ref{lemma_ba_Aeven4},
 $N=B(A)$, so $Deg(A)$ is maximal.\qed
\end{proof}

\begin{corollary}
\label{cor_lemma_ba_Aeven4}
Let $\vert A\vert \geq 4$ be even. Then in  Lemma \ref{lemmamaxba_closure}, $k=1,2$ are the only options
\end{corollary}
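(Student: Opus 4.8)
The plan is to use Lemma~\ref{lemma_ba_Aeven4}, which already restricts the parameter $k$ of Lemma~\ref{lemmamaxba_closure} to $\{1,2,3\}$ for even $\vert A\vert\geq 4$, and then to eliminate $k=3$. So suppose, for contradiction, that $M\leq B(A)$ is a maximal borrow or ancilla closed RTA with $k=3$; by Lemma~\ref{lemmamaxba_closure} this means $M^{[1]}=B_1(A)$, $M^{[2]}=B_2(A)$, and $M^{[3]}\neq B_3(A)$.

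The key step is to show $Alt(A^4)\subseteq M$. Since $B_1(A)\cup B_2(A)\subseteq M$ and $M$ is an RTA, $\langle B_1(A),B_2(A)\rangle\subseteq M$, so it suffices to see $Alt(A^4)\subseteq\langle B_1(A),B_2(A)\rangle^{[4]}$. I would get this by running the control-reduction argument of Lemma~\ref{lemmaAeven_M3} one arity higher: $B_2(A)$ supplies all $1$-controlled permutations, \cite{bks17} Lemma~18 (with $P$ the set of $3$-cycles) upgrades these to all $2$-controlled $3$-cycles and hence, via the $3$-transitivity of Lemma~\ref{lemma3trans}, to all $3$-cycles of $A^3$; a second application of \cite{bks17} Lemma~18 then yields all $3$-controlled $3$-cycles, one of which is a single $3$-cycle of $Sym(A^4)$. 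Since $\langle B_1(A),B_2(A)\rangle^{[4]}$ contains $S_A wr S_4$, it is primitive on $A^4$ by the argument already used in the proof of Theorem~\ref{theoremmax}, so Jordan's theorem forces $Alt(A^4)\subseteq\langle B_1(A),B_2(A)\rangle^{[4]}$; for $\vert A\vert=4$ the inclusion can also be checked directly in GAP \cite{GAP4}, as in Lemmas~\ref{lemmaAeven_M3} and~\ref{lemmaAeven_M4}. An alternative for $\vert A\vert\geq 5$ is to use the preceding lemma: there $Deg(A)$ is shown to be maximal among borrow and ancilla closed RTAs, and since $Deg(A)\subseteq M$ with $M^{[2]}=B_2(A)\neq S_A wr S_2=Deg(A)^{[2]}$ we get $M=B(A)$ at once.

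With $Alt(A^4)\subseteq M$, I close the argument as in the first lines of the proof of Lemma~\ref{lemma_ba_Aeven4}: because $\vert A\vert$ is even, $g\oplus i_1\in Alt(A^4)$ for every $g\in B_3(A)$, hence $g\oplus i_1\in M$, and borrow closure (which ancilla closure also implies) gives $g\in M$. As $g$ ranges over $B_3(A)$, this yields $M^{[3]}=B_3(A)$, contradicting $M^{[3]}\neq B_3(A)$. So $k=3$ is impossible, and with Lemma~\ref{lemma_ba_Aeven4} only $k=1,2$ remain.

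I expect the main obstacle to be the inclusion $Alt(A^4)\subseteq\langle B_1(A),B_2(A)\rangle^{[4]}$. Lemma~\ref{lemmaAeven_M4} is stated with $B_3(A)\subseteq C$ in the hypothesis, and for $\vert A\vert=2$ that hypothesis is genuinely needed; so one has to verify carefully that for $\vert A\vert\geq 4$ both iterations of the control-reduction lemma go through using only $B_1(A)$ and $B_2(A)$---the second iteration relying on the transitivity of $\langle B_1(A),B_2(A)\rangle$ on $A^3$ established by the first---and to settle $\vert A\vert=4$ separately by computation.
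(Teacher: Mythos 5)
Your proposal is correct and, in its load-bearing parts, is exactly the paper's proof: the paper likewise eliminates $k=3$ by invoking the maximality of $Deg(A)$ for even $\vert A\vert\geq 6$ (your ``alternative''), by a GAP computation showing that $B_2(A)$ together with the wire permutations generates $Alt(A^4)$ inside $B_4(A)$ when $\vert A\vert = 4$, and by the parity/borrow-closure step to pull $B_3(A)$ back into $M$. Your primary route --- iterating the control-reduction lemma of \cite{bks17} to obtain $Alt(A^4)$ from $B_1(A)\cup B_2(A)$ alone --- is the only departure from the paper and, as you concede, is unverified, but since your fallbacks already cover every case nothing rests on it.
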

\begin{proof}
From  Lemma \ref{lemma_ba_Aeven4} we know $k=1,2,3$ are possible. Suppose $M$ is maximal
in B(A) with $k=3$.

Suppose $\vert A\vert = 4$. 
$B_2(A)$ can be embedded in $B_4(A)$ represented on $S_{256}$ with the tuples in $A^4$
represented by the integers $1,\dots,256$, generated by the permutations
\begin{align*}
 &(1,2,3,4,5,6,7,8,9,10,11,12,13,14,15,16)\\
&\hspace{6mm}(17,18,19,20,21,22,23,24,25,26,27,28,29,30,31,32)\dots\\
&\hspace{6mm}\dots(241,242,243,244,245,246,247,248,249,250,251,252,253,254,255,256)
\end{align*}
and
$(1,2)
(17,18)\dots(241,242)$. With the wire permutations we obtain a subgroup of $S_{256}$
that is the alternating group, so $M^{[4]}=Alt(A^4)$ and by Theorem \ref{theoremAeven_BKS}
we then get $M^{[5]}=Alt(A^5)$ and thus $M$ is not maximal.

 Suppose $A$ is even with more than 6 elements.
 The degenerate RTA $Deg(A) \leq M$ because $M^{[1]}=B_1(A)$, but because $Deg(A)$
 is maximal and $M^{[2]}$ is a supergroup of $Deg(A)^{[2]}$, $M$ is all of $B(A)$ and is not maximal.\qed
\end{proof}

\begin{lemma}
 Let $A$ be  of prime power order. Then $\operatorname{Aff}(A)$ is a maximal borrow closed RTA and a maximal ancilla closed RTA.
\end{lemma}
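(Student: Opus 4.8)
The plan is to follow the now-familiar template: show that $\operatorname{Aff}(A)$ is a borrow closed (hence ancilla closed is handled separately, but the inclusions work in our favour) RTA, identify the index $k$ from Lemma \ref{lemmamaxba_closure}, and then show that any RTA properly containing $\operatorname{Aff}(A)$ must be all of $B(A)$. Write $|A| = q = p^m$. First I would record that $\operatorname{Aff}(A)$ is genuinely borrow closed: if $f \oplus i_1 \in AGL_{n+1}(q)$ then, since the last coordinate is fixed as a block and the affine map restricted to the first $n$ coordinates with the last coordinate held at $0$ is again affine, $f \in AGL_n(q)$; the same witness argument gives ancilla closure. So $\operatorname{Aff}(A)$ is a legitimate candidate. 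Note $\operatorname{Aff}(A)^{[1]} = AGL_1(q) = S_A$ only when $q$ is prime; in general $AGL_m(p) \lneq S_A$ when $m \geq 2$, so the relevant index is $k=1$ for $m\geq 2$ and $k=2$ for $m=1$ (where $\operatorname{Aff}(A)^{[1]}=S_A$, $\operatorname{Aff}(A)^{[2]}=AGL_2(p) \lneq S_A wr S_2$ unless $p=2,3$ — but this is consistent with Lemmas \ref{lemma_ba_Aodd}, \ref{cor_lemma_ba_Aeven4}).

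Next I would verify maximality. Suppose $N$ is a borrow closed RTA with $\operatorname{Aff}(A) \lneq N \leq B(A)$. Then there is a smallest arity $r$ at which $N^{[r]} \supsetneq \operatorname{Aff}(A)^{[r]} = AGL_r(q)$. The key input is that $AGL_r(q)$ is a \emph{maximal} subgroup of $S_{q^r} = B_r(A)$ by Theorem \ref{maxsubgp} (the affine case), at least for $q$ prime; for prime power $q$ one instead observes $AGL_r(q) \leq AGL_{rm}(p) \lneq B_r(A)$ and uses maximality at the $p$-level after noting $N$ contains all wire permutations so $N^{[r]}$ also contains $S_A wr S_r$. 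In either case, $N^{[r]}$ is a proper supergroup of a maximal subgroup of $B_r(A)$, hence $N^{[r]} = B_r(A)$. By borrow closure, $N^{[r-1]} \supseteq$ everything of the form $g$ with $g \oplus i_1 \in B_r(A)$, i.e. $N^{[r-1]} = B_{r-1}(A)$, and iterating down, $N^{[1]} = B_1(A) = S_A$ and $N^{[2]} = B_2(A)$. Now by Theorem \ref{theoremAodd} (if $|A|$ odd) $N = B(A)$; if $|A|$ even, we take some $f \in B_2(A)$ of odd parity on $A^2$, form $f\oplus f \oplus \cdots$ or use $f\oplus f \in N^{[4]}$ together with the GAP computation in Lemma \ref{lemmaAeven_M4}/Corollary \ref{cor_lemma_ba_Aeven4} to get $N^{[4]}=B_4(A)$, then borrow closure plus Theorem \ref{theoremAeven_BKS} pushes everything up, and borrow closure pushes back down to $N=B(A)$. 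Either way $N$ is not proper.

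The one subtlety — and the step I expect to be the main obstacle — is handling $q$ a \emph{proper} prime power separately from $q$ prime, because then $\operatorname{Aff}(A)^{[1]} = AGL_m(p)$ is not all of $S_A$, and $AGL_r(q)$ sitting inside $B_r(A) = S_{q^r} = S_{p^{rm}}$ is not one of the listed maximal subgroups of $S_{p^{rm}}$ in the obvious way. To deal with this I would argue: $N^{[1]}$ properly contains $AGL_m(p)$; since $AGL_m(p)$ is maximal in $S_A$ by Theorem \ref{maxsubgp}, $N^{[1]} = S_A$; then $N \supsetneq \langle \operatorname{Aff}(A), S_A \rangle$, and I claim the latter already forces $N^{[2]}$ to strictly exceed $\operatorname{Aff}(A)^{[2]}$ (because mixing arbitrary $S_A$ on coordinates with affine maps of arity $2$ generates more than $AGL_2(q)$ — this needs a short check, perhaps reducing to the observation that $S_A wr S_2 \not\leq AGL_2(q)$ once $q>3$), and from there the argument proceeds as above with $r=2$: $N^{[2]}$ strictly contains the maximal subgroup $S_A wr S_2$ of $B_2(A)$ (Theorem \ref{maxsubgp}, using $|A| \geq 5$ when $q\geq 5$, and $|A| = 4$ handled via the $AGL_2(4)\leq Alt$ obstruction which conveniently forces $N^{[2]}=B_2(4)$ directly), hence equals $B_2(A)$, and we continue. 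If $q \in \{2,3\}$ the claim "$\operatorname{Aff}(A)$ is maximal" is the already-established $k=2$ special case ($\operatorname{Aff}(\Z_3) = $ the RTA from Theorem \ref{theoremmax} item 2, $\operatorname{Aff}(\Z_2) = B(\Z_2)$ trivially — so that degenerate case must be excluded or noted). The cleanest write-up first disposes of small $q$ by citing the earlier discussion, then runs the uniform "maximal subgroup $\Rightarrow$ jump to $B_r(A)$ $\Rightarrow$ collapse to $B(A)$" argument for $q \geq 4$.
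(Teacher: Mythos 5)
Your overall shape is right: force $N^{[r]}=B_r(A)$ at some arity by maximality of the affine subgroup, collapse downward by borrow closure, push upward, and get ancilla maximality for free from borrow maximality because ancilla closure implies borrow closure. But you miss the one observation that makes the paper's upward step a one-liner: if $f\in N^{[n]}\setminus\operatorname{Aff}(A)^{[n]}$ then $f\oplus i_j\in N^{[n+j]}\setminus\operatorname{Aff}(A)^{[n+j]}$ for \emph{every} $j$ (padding a non-affine map with the identity stays non-affine), so maximality of $\operatorname{Aff}(A)^{[n+j]}$ in $B_{n+j}(A)$ gives $N^{[n+j]}=B_{n+j}(A)$ for all $j\geq 0$ directly, with no generation theorems at all. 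Your substitute --- collapse to $B_1(A),B_2(A)$ and regenerate upward via Theorems \ref{theoremAodd} and \ref{theoremAeven_BKS} --- works for odd $\vert A\vert$ but has a hole in the even case: from $N^{[1]}=B_1(A)$ and $N^{[2]}=B_2(A)$, Lemma \ref{lemmaAeven_M3} only yields $Alt(A^3)\subseteq N^{[3]}$, whereas Lemma \ref{lemmaAeven_M4} requires all of $B_3(A)$ as a hypothesis, and $f\oplus f$ is always an \emph{even} permutation of $A^4$, so it cannot by itself push $N^{[4]}$ past $Alt(A^4)$. Two side claims are also wrong: $AGL_1(q)=S_A$ only for $q\in\{2,3\}$ (not for all primes, so for prime $p\geq 5$ the index in Lemma \ref{lemmamaxba_closure} is $k=1$, not $2$), and $\operatorname{Aff}(\Z_2)$ is emphatically not $B(\Z_2)$ --- already $AGL_3(2)$ has order $1344$ inside $S_8$; that case is not degenerate, it is the affine maximal class of \cite{aaronsonetal15} with $k=3$, and your argument has to actually run there.

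The ``main obstacle'' you flag --- that for $q=p^m$, $m\geq 2$, the group $AGL_r(q)$ is not among the maximal subgroups of $S_{q^r}=S_{p^{rm}}$ listed in Theorem \ref{maxsubgp} --- is a genuine problem, and in fact one the paper's own proof silently elides when it asserts that $M^{[i]}$ is maximal in $B_i(A)$ for every prime power: for instance $AGL_1(8)<AGL_3(2)<S_8$, and more generally the $GF(p)$-affine bijections of $A^n\cong GF(p)^{mn}$ form a borrow- and ancilla-closed RTA lying strictly between $\operatorname{Aff}(GF(q))$ and $B(A)$, so the argument really needs $q$ prime. Your attempted repair, however, does not go through: you conflate $\operatorname{Aff}(A)^{[1]}=AGL_1(q)$ (of order $q(q-1)$) with $AGL_m(p)$, and you assume the lowest arity at which $N$ exceeds $\operatorname{Aff}(A)$ is $1$, which is not given. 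In summary: for $q$ prime your proof can be completed, most cleanly by adopting the padding observation above in place of the regeneration detour; for $q$ a proper prime power neither your argument nor the paper's succeeds as written.
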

\begin{proof}
 Let $M=\operatorname{Aff}(A)$.
 For every $i$, $M^{[i]}$ is maximal in $B_i(A)$ by Theorem \ref{maxsubgp}.
 Suppose $M$ is not maximal, so $M < N < B(A)$.
 
 Let $f\in B_n(A)$, $f\in N-M$. Then $N^{[n]}=B_n(A)$ by subgroup maximality, so for all $i<n$, $N^{[i]}=B_i(A)$.
 For all $j$, $f\oplus i_j  \in (N-M)^{[n+j]}$ so similarly $N^{[n+j]}=B_{n+j}(A)$ so $N=B(A)$ and $M$ is maximal.
 
 Because $\operatorname{Aff}(A)$ is ancilla closed and maximal as borrow closed, there can be no ancilla closed RTA between $\operatorname{Aff}(A)$ and $B(A)$ so
 $\operatorname{Aff}(A)$ is a maximal ancilla closed RTA.\qed
\end{proof}

By \cite{aaronsonetal15} we know that for $A$ of order 2,
we have the following maximal ancilla closed RTAs.
\begin{itemize}
 \item The affine mappings,
 \item The odd prime-conservative mappings, that preserve the number of 1s mod $p$, an odd prime,
 \item The parity respecting mappings, which either preserve the number of 1s mod $2$, or invert it.
\end{itemize}
The affine mappings have $k=3$ in Lemma \ref{lemmamaxba_closure} above, the parity respecting $k=2$ and the prime-conservative
mappings $k=1$.

It remains open whether these are the borrow closed maximal RTAs over $A$ of order 2.

For $A$ of order 3, we know that the affine maps are ancilla closed and as they form a maximal borrow closed RTA, they
also form a maximal ancilla closed RTA. Here $k=2$ in the above.
For $A$ of order 4, we do not know which maximal RTA arise with $k=2$.
For $A$ of order 5 or more, we know that $k=2$ arises only for the degenerate RTA $Deg(A)$.

%
%
%
%

\section{Conclusion and further work}

We have determined the maximal RTAs, using results from 
permutation group
theory and some generation results.

As we have not been able to construct explicitly an example of a maximal RTA with $i=2$ and $M^{[i]}$ of diagonal or almost simple type,
the conjecture remains that these are not, in fact, possible. We note however that if such examples exist, they will arise
for $A$ of order 8 or more, so will probably not be relevant for any practical reversible computation implementation.

In future work we aim to determine the weight functions as described by \cite{jerabek18} for maximal RTAs, in order to determine whether
they hold some interesting insights.

The results for borrow and ancilla closed RTAs are not as comprehensive, but we see that it should be possible to determine
these in the foreseeable future.
For the ancilla case, we imagine that many of the techniques of \cite{aaronsonetal15} will prove useful.
In the ancilla case, we know all maximal RTA with $k=2$ except for $A$ of order 4.
One of the ongoing tasks is to investigate maximal ancilla closed RTA with $k=1$.

\section{Acknowledgements}

Michael Guidici has helped extensively with understanding primitive permutations groups, for which I thank him greatly.

%
%
%
\bibliographystyle{plain}
%

\bibliography{20_revcomp}

\end{document}